\tikzstyle{box} = [rectangle,text centered, draw=black]
\tikzstyle{arrow} = [thick, ->, >=stealth]
\newcommand{\rd}{{\rm d}}
\newcommand{\e}{{\rm e}}
\newcommand{\Ran}{\mathop{\rm Ran}}
\newcommand{\N}{{\mathbb N}}
\newcommand{\R}{{\mathbb R}}
\newcommand{\C}{{\mathbb C}}
\newcommand{\Z}{{\mathbb Z}}
\newcommand\eps{\epsilon}
\newcommand{\dist}{\mathrm{dist}}
\newcommand\re{\mathrm{Re}\,}
\newcommand\im{\mathrm{Im}\,}
\newcommand\I{\mathrm{i}}
\DeclareMathOperator{\Tr}{Tr}
\DeclareMathOperator{\spec}{spec}
\newcommand\jc[1]{\textcolor{black}{#1}}         
\newtheorem{theorem}{Theorem}[section]
\newtheorem{proposition}[theorem]{Proposition}
\newtheorem{lemma}[theorem]{Lemma}
\newtheorem{remark}[theorem]{Remark}
\begin{document}

\title[Eigenvalue bounds on compact manifolds]{Eigenvalue bounds for {S}chr\"{o}dinger operators with complex potentials on compact manifolds}

\subjclass[2020]{58J50, 35P15, 31Q12}
 \author{Jean-Claude Cuenin}
 \address{Department of Mathematical Sciences, Loughborough University, Loughborough,
 Leicestershire, LE11 3TU United Kingdom}
 \email{J.Cuenin@lboro.ac.uk}
 
 \date{}
 \thanks{Support through the Engineering \& Physical Sciences Research Council (EP/X011488/1) is acknowledged.}

\begin{abstract}
We prove eigenvalue bounds for Schr\"odinger operator $-\Delta_g+V$ on compact manifolds with complex potentials $V$. The bounds depend only on an $L^q$-norm of the potential, and they are shown to be optimal, in a certain sense, on the round sphere and more general Zoll manifolds. These bounds are natural analogues of Frank's \cite{MR2820160} results in the Euclidean case.
\end{abstract}
\maketitle

\section{Introduction and main results}
\subsection{Spectral inclusion} Let $\Delta_g$ be the Laplace-Beltrami operator on a closed (i.e., compact boundaryless) Riemannian manifold $(M,g)$ of dimension $n\geq 2$.
We consider Schrödinger operators 
\begin{align}\label{Schrödinger operator on M}
    -\Delta_g+V \quad \mbox{on } L^2(M)
\end{align}
 with a complex-valued potential $V$. The unperturbed operator $-\Delta_g$ is a 
non-negative, unbounded self-adjoint operator in $L^2(M)$, defined with respect to the Riemannian volume measure $\rd x$. Since $-\Delta_g$ has compact resolvent, its spectrum consists of nonnegative eigenvalues $0=\lambda_0^2\leq \lambda_1^2\leq \ldots$ with $\lambda_k^2\to\infty$, and the set of corresponding eigenfunctions $(e_j)$ forms an orthonormal basis of $L^2(M)$. 
By standard perturbation theory, if $V\in L^{\infty}(M)$, then the eigenvalues of $-\Delta_g+V$ are contained in a union of closed disks, 
\begin{align}\label{Linfty bound}
\spec(-\Delta_g+V)\subset \bigcup_{k=0}^{\infty}D(\lambda_k^2,r), \quad r=\|V\|_{L^{\infty}(M)}.
\end{align}
 In this paper, we deal with unbounded potentials, i.e., $V\in L^q(M)$ for some $q<\infty$. 

  \begin{theorem}\label{theorem main}
Let $\Delta_g$ be the Laplace-Beltrami operator on a closed Riemannian manifold $(M,g)$ of dimension $n\geq 2$, and let $q> n/2$. Then there exists a constant $C=C(M,g,q)$ such that for all $V\in L^q(M)$,
\begin{align}\label{spectral inclusion}
    \spec(-\Delta_{g}+V)\subset \bigcup_{k=0}^{\infty}D(\lambda_k^2,Cr_k)\cup \{z\in\C: |z|^{\frac{1}{2}}(1+|z|)^{-\sigma(q)}\leq C \|V\|_{L^q(M)}\},
\end{align}
where $\lambda_k^2$ are the eigenvalues of $-\Delta_g$, 
$r_k:=\|V\|_{L^q(M)}(1+\lambda_k)^{2\sigma(q)}$,
and
\begin{align}\label{sigma(q)}
    \sigma(q):=\begin{cases}
        \frac{n}{2q}-\frac{1}{2}\quad &\mbox{if } \frac{n}{2}\leq  q\leq \frac{n+1}{2},\\
        \frac{n-1}{4q}\quad &\mbox{if } \frac{n+1}{2}\leq q\leq \infty. 
    \end{cases}   
\end{align}
If $n\geq 3$ and $q=n/2$, then there exist constants $C,c>0$, depending on $M,g,q$, such that for all $V\in L^{\frac{n}{2}}(M)$ satisfying $\|V\|_{L^{\frac{n}{2}}(M)}\leq c$, we have
\begin{align}\label{q=n/2 spectral inclusion}
    \spec(-\Delta_{g}+V)\subset \bigcup_{k=0}^{\infty}D(\lambda_k^2,Cr_k),\quad r_k:=\|V\|_{L^{\frac{n}{2}}(M)}(1+\lambda_k).
\end{align}
\end{theorem}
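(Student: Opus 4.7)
My starting point would be the Birman-Schwinger principle. Writing $V = U|V|$ with $|U|=1$ almost everywhere, a number $z\notin\spec(-\Delta_g)$ is an eigenvalue of $-\Delta_g+V$ only if the Birman-Schwinger operator
$$K_V(z) := |V|^{1/2}\,U\,(-\Delta_g - z)^{-1}\,|V|^{1/2}$$
has operator norm at least one on $L^2(M)$. To establish \eqref{spectral inclusion} it therefore suffices to prove $\|K_V(z)\|_{L^2\to L^2}<1$ whenever $z$ lies outside the union of disks and outside the set on the right-hand side of \eqref{spectral inclusion}. Two applications of H\"older's inequality reduce this to a resolvent bound,
$$\|K_V(z)\|_{L^2\to L^2} \leq \|V\|_{L^q(M)}\cdot \|(-\Delta_g-z)^{-1}\|_{L^{p'}(M)\to L^p(M)},\qquad p=\tfrac{2q}{q-1},$$
with the correct dependence on $|z|$ and on $\dist(z,\spec(-\Delta_g))$.

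The bulk of the argument then lies in the resolvent bound. I would decompose spectrally, $(-\Delta_g-z)^{-1} = \sum_k (\lambda_k^2-z)^{-1}\Pi_k$, and organize the sum into frequency clusters of unit width. For each cluster I invoke Sogge's spectral projector bound $\|\Pi_{[\lambda,\lambda+1]}\|_{L^2\to L^p}\lesssim (1+\lambda)^{\sigma(q)}$, noting that the exponent $\sigma(q)$ in \eqref{sigma(q)} is precisely the Sogge exponent for $p=2q/(q-1)$, the kink at $q=(n+1)/2$ reflecting the Stein--Tomas/Sogge transition. A $TT^*$ argument gives the single-cluster bound $\|\Pi_k\|_{L^{p'}\to L^p}\lesssim (1+\lambda_k)^{2\sigma(q)}$; summing the remaining clusters (and invoking the Kenig-Ruiz-Sogge-type uniform Sobolev inequality of Bourgain-Shao-Yao / Dos Santos Ferreira-Kenig-Salo in the range $n/2<q\leq (n+1)/2$) yields
$$\|(-\Delta_g-z)^{-1}\|_{L^{p'}\to L^p} \lesssim (1+|z|)^{2\sigma(q)-1}$$
for $z$ at a sufficient distance from $\spec(-\Delta_g)$.

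The spectral inclusion now follows by case analysis. If $z$ is close to some $\lambda_k^2$, the dominant contribution to $K_V(z)$ comes from the single eigenspace at $\lambda_k^2$, and the Birman-Schwinger norm is smaller than $1$ precisely when $|z-\lambda_k^2|>Cr_k$ with $r_k=\|V\|_{L^q}(1+\lambda_k)^{2\sigma(q)}$, giving the disks in \eqref{spectral inclusion}. If $z$ is far from the spectrum, the global resolvent bound gives $\|K_V(z)\|_{L^2\to L^2}\lesssim \|V\|_{L^q}(1+|z|)^{2\sigma(q)-1}$, which is less than $1$ exactly outside the second region of \eqref{spectral inclusion}. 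The endpoint $q=n/2$ for $n\geq 3$ is handled analogously via the critical Bourgain-Shao-Yao uniform Sobolev inequality; since the resolvent bound no longer improves as $|z|\to\infty$ in that case, the smallness hypothesis $\|V\|_{L^{n/2}(M)}\leq c$ is needed to absorb implicit constants in the Birman-Schwinger step, yielding \eqref{q=n/2 spectral inclusion}.

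The main technical obstacle is establishing the uniform resolvent estimate at this level of sharpness on a general closed manifold. Unlike on $\R^n$, where the spectrum of $-\Delta$ is absolutely continuous, here the spectral clusters are genuinely discrete and contribute unavoidable singularities that must be separated from the bulk by hand. Reconciling Sogge's cluster bounds with Kenig-Ruiz-Sogge-type uniform Sobolev estimates, so that the near-spectral and far-spectral contributions are controlled simultaneously across the whole range $q>n/2$, is precisely what produces the two-part form of the inclusion \eqref{spectral inclusion}, and the need to track the cluster singularities individually is what gives rise to the sequence of radii $r_k$ rather than a single global radius.
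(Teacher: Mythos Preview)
Your strategy matches the paper's: Birman--Schwinger plus $L^{p}\to L^{p'}$ resolvent estimates, with the latter split into a uniform region $|\im\sqrt{z}|\geq 1$ (handled by Dos Santos Ferreira--Kenig--Salo and Frank--Schimmer) and a non-uniform region $|\im\sqrt{z}|<1$ (handled via Sogge's cluster bounds). There is, however, a genuine error in your resolvent exponent. The correct uniform bound is $\|(-\Delta_g-z)^{-1}\|_{p\to p'}\lesssim |z|^{\sigma(q)-1/2}$, not $(1+|z|)^{2\sigma(q)-1}$; since $2\sigma(q)-1<\sigma(q)-\tfrac12$ whenever $\sigma(q)<\tfrac12$, your claimed bound is strictly stronger and is false---already the single cluster nearest $\re\sqrt{z}$ contributes $\approx|z|^{\sigma(q)}/d(z)\approx|z|^{\sigma(q)-1/2}$ on the line $|\im\sqrt{z}|=1$. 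You appear to have conflated the $|z|$ and $|z|^{1/2}$ scales: the factor $(1+\lambda_k)^{2\sigma(q)}$ in $r_k$ arises because the non-uniform bound $d(z)^{-1}(1+|z|)^{\sigma(q)}$ becomes $d(z)^{-1}(1+\lambda_k)^{2\sigma(q)}$ when $|z|\approx\lambda_k^2$. With the correct exponent, the Birman--Schwinger inequality reads $\|V\|_{L^q}|z|^{\sigma(q)-1/2}<1$, which is precisely the complement of the second set in the statement; with your exponent it is not.

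A related technical point your sketch glosses over: one cannot obtain the sharp uniform bound $|z|^{\sigma(q)-1/2}$ by summing Sogge cluster estimates $(1+m)^{2\sigma(q)}|m^2-z|^{-1}$ over $m$; this incurs a logarithmic loss from the terms with $m\approx\re\sqrt{z}$. The paper instead takes the parametrix-based uniform bound as a black box for $|\im\sqrt{z}|\geq 1$, and for $|\im\sqrt{z}|<1$ it isolates the single cluster at $\re\sqrt{z}$ (bounded by Sogge combined with the trivial $L^2$ resolvent bound, giving the $d(z)^{-1}(1+|z|)^{\sigma(q)}$ term) and reduces the remaining far-frequency piece back to the uniform case via the resolvent identity with a comparison point $z_1$ satisfying $|\im\sqrt{z_1}|=1$. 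Your outline names both ingredients but treats them as interchangeable; they are not, and the way they are combined is exactly where the work lies.
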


We emphasize that the constant in \eqref{spectral inclusion} is \textit{independent of $V$}. In particular, $V$ is allowed to be \emph{frequency-dependent}; cf. Theorem \ref{theorem optimality sphere} below. Estimates of a similar type were established in \cite{MR3655948} for the Landau Hamiltonian (that is, the Schr\"odinger operator with constant magnetic field) in $\R^n$ ($n$ even). In contrast to the situation here, the radii of the disks tend to zero (i.e., the powers of $\lambda_k$ are negative) in this case.

Under the assumptions of Theorem \ref{theorem main}, the operator $-\Delta_g+V$ can be defined as an $m$-sectorial operator by standard quadratic form methods; see, e.g., \cite{MR1335452}, or more specifically, \cite[Appendix B]{MR3717979}. The proof uses Sobolev embedding, as in the case of real-valued $V$, considered in \cite[Section 6]{MR4445914}.

\begin{remark}
\noindent (i) We will refer to the union of disks $\bigcup_{k=0}^{\infty}D(\lambda_k^2,Cr_k)$ as ``the first set" in \eqref{spectral inclusion} and to $\{z\in\C: |z|^{\frac{1}{2}}(1+|z|)^{-\sigma(q)}\leq C \|V\|_{L^q(M)}\}$ as ``the second set". By distinguishing the cases $|z|\leq 1$ or $|z|>1$, it is easy to see that the second set is contained in the disk
\begin{align}\label{second set included in disk at origin}
 \{z\in\C: |z|\leq \max((2^{\sigma(q)}C \|V\|_{L^q(M)})^2,(2^{\sigma(q)}C \|V\|_{L^q(M)})^{\frac{1}{1/2-\sigma(q)}})\} 
\end{align}
centered at the origin. Since $\lambda_0=0$, the disk $D(\lambda_0^2,Cr_0)$ is also centered at the origin and could be absorbed in \eqref{second set included in disk at origin} by including the term $C\|V\|_{L^q(M)}$ in the maximum. However, the decomposition \eqref{spectral inclusion} arises naturally in the proof.

\noindent (ii) Asymptotically, as $k\to\infty$, the disks in the first set are contained inside the region
\begin{align}\label{asymptotic region main theorem}
    \{z\in\C: |\im z|\leq C\|V\|_{L^q(M)} (\re z)^{\sigma(q)}\}.
\end{align}
Note that $\sigma(q)<1/2$ for $q>n/2$ and $\sigma(q)=1/2$ for $q=n/2$. Hence, for the latter case, the region \eqref{asymptotic region main theorem} is the interior of a parabola around the positive $\re z$ axis, while for $q>n/2$, it is much smaller, approaching a strip $|\im z|\leq C\|V\|_{L^q(M)}$ as $q\to\infty$. 
\end{remark}

\subsection{Comparison with the Euclidean case}
 We recall the bounds for the Schr\"odinger operators on $L^2(\R^n)$ that we wish to compare our results to: For $n/2<q\leq (n+1)/2$, 
\begin{align}\label{Rn bound}
    \spec(-\Delta+V)\subset [0,\infty)\cup \Big\{z\in\C:|z|^{q-\frac{n}{2}}\leq C_{n,q}\int_{\R^n}|V(x)|^{q}\rd x\Big\}.
\end{align}      
Moreover, if $n\geq 3$ and $q=n/2$, then there exists a constant $c_{n}$ such that there are no eigenvalues outside $[0,\infty)$ if $\|V\|_{L^{n/2}(\R^n)}\leq c_n$.
The one-dimensional bound is due to Abramov--Aslanyan--Davies \cite{MR1819914}, and the higher dimensional bounds are due to Frank \cite{MR2820160}. It is known \cite{MR4561804} that the inequality \eqref{Rn bound} fails for $q>(n+1)/2$, whereas Laptev and Safronov \cite{MR2540070} had conjectured its validity up to $q<n$. The limitation to $q\leq (n+1)/2$ essentially comes from the Knapp example in harmonic analysis. It requires that $V$ decay sufficiently fast at infinity in an average sense. On a compact manifold, such a condition is of course meaningless. 
However, if we consider the $n$-dimensional flat torus $\mathbb{T}^n=\R^n/\Z^n$ and rescale by a factor $L>0$, we obtain
\begin{align}\label{spectra scaled}
    \spec_{L^2(L\mathbb{T}^n)}(-\Delta+L^{-2}V(\cdot/L))=L^{-2}\spec_{L^2(\mathbb{T}^n)}(-\Delta+V).
\end{align} 
Hence, for the rescaled operator, the decay of the potential is now measured relative to the scale~$L$. According to Theorem \ref{theorem main}, its spectrum is contained in the $L^{-2}$ dilation of the set on the right-hand side of \eqref{spectral inclusion}. In Section \ref{Section Scaling considerations}, we will show that, as $L\to\infty$, all limit points of this set lie in the right-hand side of \eqref{Rn bound}, provided that $n/2<q\leq (n+1)/2$. More precisely, it follows from the proof of Lemma \ref{lemma scaling limit} that limit points of the first set in \eqref{spectral inclusion} (the union of disks) are contained in $[0,\infty)$, while limit points of the second set of \eqref{spectral inclusion} are contained in the second set on the right-hand side of \eqref{Rn bound}. 

We could also use \eqref{spectral inclusion} to prove \eqref{Rn bound}, but since the latter is already known, we will not pursue this direction. More importantly, the argument shows that since \eqref{Rn bound} is known to be optimal~\cite{MR4561804}, the second set on the right-hand side of \eqref{spectral inclusion} is also optimal in the sense that, for large $|z|$, the power $|z|^{\frac{1}{2}-\sigma(q)}$ cannot be improved on a torus, in the regime where the sidelength $L$ tends to infinity.

\subsection{Saturation on the sphere and Zoll manifolds}
We turn to the optimality of the first set on the right-hand side of~\eqref{spectral inclusion} when $M =S^n$, the round sphere, or more generally, a Zoll manifold. These are manifolds whose geodesics are periodic with a common minimal period. Without loss of generality after rescaling the metric, we may assume that
the common minimal period is $2\pi$. By a theorem of Weinstein
\cite{MR482878}, there is a constant $\alpha=\alpha_M\geq 0$
so that all the nonzero eigenvalues of $-\Delta_g$ cluster around the values $(k+\alpha)^2$, $k\in\N$. More precisely, 
there is a constant $A=A_M$ such that each non-zero eigenvalue is contained in a cluster $[(k+\alpha-A/k)^2,(k+\alpha+A/k)^2]$ for some $k\in\N$, and the number of eigenvalues in this cluster, counted with multiplicity, is $d_k\approx k^{n-1}$. We label the eigenvalues in the $k$-th cluster by $\lambda_{k,j}^2$, $j=1,\ldots,d_k$. 

In the case of the sphere $S^n$, the eigenvalues of the Laplacian are given explicitly by $\lambda_{k,j}^2 = k(k+n-1)$, and thus the first set in~\eqref{spectral inclusion} is a union of disks of the form
\begin{align}\label{D_k sphere}
D_k := D(k(k+n-1),Ck^{2\sigma(q)} \|V\|_{L^q(S^n)}),
\end{align}
when $k\geq 1$. We will consider the asymptotic regime $k\to\infty$ here and allow $V$ to depend on $k$. For general Zoll manifolds, we will additionally assume that 
\begin{align}\label{assumption V Zoll}
k^{2\sigma(q)} \|V\|_{L^q(M)}\gtrsim 1,   
\end{align}
which ensures that all the disks $D(\lambda_{k,j}^2,Cr_{k,j})$ corresponding to $k$-th cluster are contained in the larger disk
\begin{align}\label{D_k tilde Zoll}
    \tilde{D}_k := D((k+\alpha)^2,Ck^{2\sigma(q)} \|V\|_{L^q(S^n)}),
\end{align}
for some larger constant $C$, depending on $A$ and the lower bound in \eqref{assumption V Zoll}.

\begin{theorem}\label{theorem optimality sphere}
i) Let $n\geq 2$ and $q>n/2$. 
For any $\rho>0$, $\theta\in [0,2\pi)$ and \jc{$k\gg 1+\rho^{1/(1-2\sigma(q))}$},
there is a potential $V=V_{k,\theta,\rho} \in L^{q}(S^n)$, $\|V\|_{L^{q}(S^n)}\approx \rho$, such that
    \begin{align}\label{optimality on the sphere}
    k(k+n-1)+\rho\e^{\I\theta}(1+\mathcal{O}(\rho k^{2\sigma(q)-1})) k^{2\sigma(q)}\in \spec(-\Delta_{S^n}+V)\cap D_{k}. 
\end{align}
If $n\geq 3$ and $q=n/2$, then \eqref{optimality on the sphere} holds provided $\rho\ll 1$.\\

\noindent ii) If $M$ is a general Zoll manifold of dimension $n\geq 2$, $q>n/2$, $\rho>0$, $\theta\in [0,2\pi)$ and $k\gg 1$, and if we assume in addition that $k^{2\sigma(q)}\rho\gg 1$, then there is a potential $V=V_{k,\theta,\rho} \in L^{q}(M)$, $\|V\|_{L^{q}(S^n)}\approx \rho$, such that
    \begin{align}\label{optimality Zoll}
    (k+\alpha)^2+\rho\e^{\I\theta}(1+\mathcal{O}(\rho k^{2\sigma(q)-1})) k^{2\sigma(q)}\in \spec(-\Delta_g+V)\cap \tilde{D}_{k}. 
\end{align}
If $n\geq 3$ and $q=n/2$, then \eqref{optimality Zoll} holds provided $\rho\ll 1$.
\end{theorem}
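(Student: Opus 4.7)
The plan is to saturate the sharp Sogge $L^{2}$-to-$L^{p}$ bound for the spectral cluster projector $P_k$ of $-\Delta_g$ via an explicit extremizer, use it to construct the perturbation $V$, and then extract the desired eigenvalue by a Birman--Schwinger argument that exploits the gap between neighbouring clusters. Set $p:=2q/(q-1)$, the Hölder-dual index with $p/q'=2$. In both ranges of $\sigma(q)$ one checks $\sigma(q)=\delta(p,n)$, where $\delta(p,n)=n(\tfrac12-\tfrac1p)-\tfrac12$ is the Sobolev Sogge exponent when $q\leq(n+1)/2$ (equivalently $p\geq 2(n+1)/(n-1)$), and $\delta(p,n)=\tfrac{n-1}{2}(\tfrac12-\tfrac1p)$ is the Knapp Sogge exponent when $q\geq(n+1)/2$. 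Sogge's bound is saturated on Zoll manifolds by highest-weight spherical harmonics in the Sobolev regime and by zonal harmonics in the Knapp regime, yielding an $L^2$-normalised $e_k\in\Ran(P_k)$ with $\|e_k\|_{L^p}\gtrsim k^{\sigma(q)}$.

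Define $V:=\rho\e^{\I\theta}W$ with $W:=\|e_k\|_{L^p}^{-p/q}\,|e_k|^{2/(q-1)}\geq 0$. Using $2q/(q-1)=p$, a direct computation gives $\|V\|_{L^q}=\rho$ and $\mu_k:=\langle We_k,e_k\rangle=\|e_k\|_{L^p}^{2}\asymp k^{2\sigma(q)}$. The Sogge--Hölder estimate $\|P_kWP_k\|_{L^2\to L^2}\leq\|W\|_{L^q}\|P_k\|_{L^2\to L^p}^{2}\lesssim k^{2\sigma(q)}$ shows that $\mu_k$ is, up to constants, the top eigenvalue of the positive operator $P_kWP_k$, with eigenvector close to $e_k$.

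Apply the Birman--Schwinger principle: for $z\notin\spec(-\Delta_g)$, $z\in\spec(-\Delta_g+V)$ iff $-1\in\spec\bigl(\rho\e^{\I\theta}\sqrt W(z+\Delta_g)^{-1}\sqrt W\bigr)$. Split $(z+\Delta_g)^{-1}=(z-\lambda_k^2)^{-1}P_k+R_k(z)$ with $R_k(z):=\sum_{j\neq k}(z-\lambda_j^2)^{-1}P_j$. Using $\|\sqrt W P_j\sqrt W\|_{L^2\to L^2}=\|P_j\sqrt W\|_{L^2\to L^2}^{2}\lesssim j^{2\sigma(q)}$ (Sogge applied to $\sqrt W\in L^{2q}$, which has unit norm since $\|W\|_{L^q}=1$), and splitting the sum over $j$ into the ranges $j\ll k$, $j\sim k$, $j\gg k$ (the last range convergent thanks to $\sigma(q)<\tfrac12$ for $q>n/2$), one obtains $\|\sqrt W R_k(z)\sqrt W\|_{L^2\to L^2}\lesssim k^{2\sigma(q)-1}\log k$ for $z\in D_k$. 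The hypothesis $k\gg\rho^{1/(1-2\sigma(q))}$ (equivalently $\rho k^{2\sigma(q)}\ll k$) keeps $D_k$ safely inside the gap to the adjacent clusters, so the rank-one main term dominates. Projecting the eigenvalue equation onto the $\sqrt We_k$ direction and treating the orthogonal complement as an $\mathcal O(\rho k^{2\sigma(q)-1})$ perturbation reduces matters to the scalar equation
\[
-1=\frac{\rho\e^{\I\theta}\mu_k}{z-\lambda_k^{2}}+\mathcal O\!\bigl(\rho k^{2\sigma(q)-1}\bigr),
\]
whose unique solution near $\lambda_k^2$ is $z=\lambda_k^{2}+\rho\e^{\I\theta}\mu_k\bigl(1+\mathcal O(\rho k^{2\sigma(q)-1})\bigr)$, the point claimed in \eqref{optimality on the sphere}. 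Part (ii) is identical with $\lambda_k^2$ replaced by $(k+\alpha)^2$, the $\mathcal O(1)$ width of the Zoll cluster being absorbed into the perturbation under $k^{2\sigma(q)}\rho\gg 1$.

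The main obstacle is controlling the off-cluster Birman--Schwinger operator $\sqrt W R_k(z)\sqrt W$ uniformly in $k$: this requires Sogge bounds on \emph{every} cluster $P_j$ (not just the extremizer's), and the borderline case $q=n/2$ is exactly where summability at $j\to\infty$ fails, forcing the separate small-$\rho$ regime. A secondary technical point is to isolate a single eigenvalue among the $d_k\asymp k^{n-1}$ eigenvalues of $P_kWP_k$, which one handles by a Grushin/Rouché-type argument exploiting the spectral gap $\mu_k-(\text{second eigenvalue of }P_kWP_k)\asymp\mu_k$ built into the Sogge extremizer $e_k$.
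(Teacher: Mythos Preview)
Your overall strategy---Birman--Schwinger with a resonant/nonresonant splitting of the resolvent, a weight $W$ saturating Sogge's cluster bound, and a Rouch\'e-type localisation---coincides with the paper's. The genuine gap is in the last step, where you assert that the top eigenvalue $\mu_k$ of $P_kWP_k$ is separated from the second one by a gap $\approx\mu_k$ ``built into the Sogge extremizer $e_k$''. This is unjustified, and the paper explicitly flags it as the chief obstacle: ``we have no uniform (independent of $k$) lower bound on the gap between the largest eigenvalue $a_0(k)$ of $\chi P_k\chi$ and the next largest distinct eigenvalue''. For a concrete illustration, take $e_k=Q_k$, the highest-weight harmonic on $S^2$ (the relevant extremizer for $q\geq 3/2$): then $W=c_k|Q_k|^{2/(q-1)}$ depends only on the height coordinate, so $P_kWP_k$ is diagonal in the basis $\{Y_k^m\}_{|m|\leq k}$ with eigenvalues $\langle WY_k^m,Y_k^m\rangle$; since the $Y_k^m$ with $|m|$ close to $k$ all concentrate on the same equatorial tube, these eigenvalues agree up to factors $1+O(1/k)$, and no uniform gap is available. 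The paper's substitute is the Frank--Sabin Schatten bound $\bigl(\sum_j a_j(k)^{\beta(q)}\bigr)^{1/\beta(q)}\lesssim k^{2\sigma(q)}$, which forces $a_j(k)\lesssim j^{-1/\beta(q)}a_0(k)$ and hence $a_{j_0}(k)\leq(1-C_1)a_0(k)$ for some \emph{fixed} $j_0$; a pigeonhole over $j_0+1$ equal subintervals of $[a_{j_0}(k),a_0(k)]$ then locates a radius for the Rouch\'e contour that avoids every zero $z_{{\rm r},j}$ of $\mathcal{A}_{\rm r}$. Without this ingredient your Grushin/Rouch\'e reduction cannot pin down the eigenvalue to the precision $O(\rho^2 k^{4\sigma(q)-1})$ required by \eqref{optimality on the sphere}.

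Two smaller points. Your cluster-by-cluster triangle-inequality estimate on the nonresonant piece yields $k^{2\sigma(q)-1}\log k$, carrying a spurious logarithm; the paper removes it by invoking the $L^{p}\to L^{p'}$ resolvent bound \eqref{frequencies far from re sqrt z}, whose proof uses $L^2$-orthogonality of the clusters to produce a summable weight $(|j-k|+1)^{-2}$ rather than $|j-k|^{-1}$. Also, you have the extremizers reversed: the zonal harmonic saturates Sogge's $L^2\to L^{p'}$ bound in the range $p'\geq 2(n+1)/(n-1)$ (your ``Sobolev'' regime $q\leq(n+1)/2$), and the highest-weight harmonic does so in the complementary ``Knapp'' range.
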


\begin{remark}\label{remark theorem optimality}
 (i) The case $q=\infty$ is included but is not particularly interesting, as it is easily seen that \eqref{Linfty bound} is sharp when  $V$ is taken to be a constant potential.\\

 \noindent (ii) Theorem \ref{theorem optimality sphere} shows that, on the sphere or more general Zoll manifolds, the radius of the disks in Theorem \ref{theorem main}, $r_k=\|V\|_{L^q(M)}(1+\lambda_k)^{2\sigma(q)}$, cannot be decreased (up to the value of the constant). More precisely, \eqref{spectral inclusion} cannot hold if $r_k$ there is replaced by $\eps(\lambda)r_k$, where $\eps:\R_+\to\R_+$ is any decreasing function satisfying $\lim_{\lambda\to\infty}\eps(\lambda)=0$. 
\end{remark}

\subsection{Resolvent estimates}\label{Subsection intro: resolvent estimates}
Frank \cite{MR2820160} proved the Euclidean bound \eqref{Rn bound} by combining uniform $L^p\to L^{p'}$ resolvent bounds, established by Kenig, Ruiz, and Sogge \cite{MR894584} in 1987, with the well-known Birman--Schwinger principle. The latter stipulates that $z$ is an eigenvalue of $-\Delta+V$ if and only if $-1$ is an eigenvalue of the compact operator $|V|^{1/2}(-\Delta-z)^{-1}V^{1/2}$. Here and in the following, $V^{1/2}:=V/|V|^{1/2}$. We will follow the same strategy for proving Theorem~\ref{theorem main}. Compared to the Euclidean case, $L^p\to L^{p'}$ resolvent bounds for the Laplace--Beltrami operator on compact manifolds have a comparatively short history. To make a direct connection to Theorem \ref{theorem main} we shall adopt the following convention throughout this paper,
\begin{align}\label{1/q=1/p-1/p'}
     \frac{1}{q}=\frac{1}{p}-\frac{1}{p'}.
\end{align}
Dos Santos Ferreira, Kenig, and Salo \cite{MR3200351} proved the uniform bound 
\begin{align}\label{Dos Santos Ferreira, Kenig and Salo}
\|(-\Delta_g-z)^{-1}\|_{L^{p}(M)\to L^{p'}(M)}\lesssim|z|^{\sigma(q)-\frac{1}{2}},\quad |\im\sqrt{z}|\geq 1,
\end{align}
for $p=2n/(n+2)$ and $n\geq 3$, in which case $\sigma(q)-\frac{1}{2}=0$. The latter improves earlier work of Shen \cite{MR2366961} for the flat torus. Here and in the following, we are using the principal branch of the square root on $\C\setminus (-\infty,0]$ with $\re\sqrt{z}\geq 0$. Frank and Schimmer \cite{MR3620715}, and, independently, Burq, Dos Santos Ferreira, and Krupchyk \cite{MR3848231}, proved \eqref{Dos Santos Ferreira, Kenig and Salo} for the endpoint $p=2(n+1)/(n+3)$ and $n\geq 2$. A straightforward interpolation argument, combined with some additional considerations specific to the case  $n=2$, will yield the estimates for the intermediate values of $p$ in the region where $|\im \sqrt{z}| \geq 1$ (see also \cite[Remark 9]{MR3620715}). Note that the estimates are uniform in this region, as $\sigma(q)-\frac{1}{2}\leq 0$. We will see that these uniform resolvent estimates account for the second set on the right-hand side of~\eqref{spectral inclusion}. In contrast, the first set, consisting of a union of disks, corresponds to non-uniform resolvent estimates in the complementary region:
\begin{align}\label{non-uniform resolvent bounds}
  \|(-\Delta_g-z)^{-1}\|_{L^{p}(M)\to L^{p'}(M)}\lesssim\dist(z,\spec(-\Delta_g))^{-1}(1+ |z|)^{\sigma(q)},\quad &|\im\sqrt{z}|< 1.
\end{align}
These estimates are new in this generality. They are proved by using Sogge's spectral cluster bounds \cite{MR930395}, in the spirit of Bourgain, Shao, Sogge, and Yao~\cite{MR3302640} and the author's own work~\cite{MR4664427}. The “universal” bounds of Sogge \cite{MR930395}, valid for any closed manifold $M$, state that
\begin{align}\label{universal cluster bound Sogge}
    \|\mathbf{1}(\sqrt{-\Delta_g}\in [\lambda-1/2,\lambda+1/2])\|_{L^{2}(M)\to L^{p'}(M)}\lesssim(1+\lambda)^{\sigma(q)}.
\end{align}
Sogge's bounds hold for the larger range $2\leq p'\leq \infty$ (see \eqref{def. nu(p')}, \eqref{Sogge 1}) than indicated by \eqref{sigma(q)}, \eqref{1/q=1/p-1/p'}. However, when using \eqref{universal cluster bound Sogge} to prove resolvent estimates, the latter restrictions are imposed by Sobolev embedding.

We remark that, in the special case $M=S^n$, $n\geq 3$ and $p=2n/(n+2)$, \eqref{non-uniform resolvent bounds} was proved in \cite[Theorem 1.1]{MR3302640}.
In fact, they showed that \eqref{non-uniform resolvent bounds} is an approximate equality in this case. This shows that the region of $z\in\C$ for which \eqref{Dos Santos Ferreira, Kenig and Salo} holds for $n\geq 3$ and $p=2n/(n+2)$, is essentially optimal on the sphere and, more generally, on any Zoll manifold.
Similarly, optimality of the region $|\im\sqrt{z}|>1$ in \eqref{Dos Santos Ferreira, Kenig and Salo} for Zoll manifolds, in the case $p=2(n+1)/(n+3)$ and $n\geq 2$, was proved in \cite{MR3620715}.

\subsection*{Notation} 
We write $A\lesssim B$ for two non-negative quantities $A,B\geq 0$ to indicate
that there is a constant $C>0$ such that $A\leq C B$. If $C$ depends on a list of parameters $\ell$, we sometimes emphasize this by writing $A\lesssim_{\ell} B$. However, the dependence of $C$ on fixed parameters like the manifold $(M,g)$, the dimension $n$ and the Lebesgue exponent $q$ is usually omitted. The notation $A\approx B$ means $A\lesssim B\lesssim A$. We will often consider the frequency $\lambda_k$ as an asymptotic parameter, $\lambda_k\to\infty$ (thus also $k\to \infty$). The notation $A\ll B$ means that $A\leq c B$, where $c>0$ is suitably small but fixed (in particular, independent of $k$). Sometimes, this notation is used to shorten a ``there exists..." statement, e.g.\ in Theorem \ref{theorem optimality sphere} $k\gg 1$ means ``there exists $k_0\in \N$ such that for all $k\geq k_0$, we have...". 
We denote the $L^p$ norm of a function $f$ on $M$, with respect to the Riemannian measure $\rd x$, by $\|f\|_{L^p(M)}$. If $T:L^s(M)\to L^r(M)$ is a bounded linear operator, we denote its operator norm by $\|T\|_{L^s(M)\to L^r(M)}$. If $r=s=2$, we sometimes write $\|T\|:=\|T\|_{L^2(M)\to L^2(M)}$.
For a subset $\Omega\subset \R$, we define $\mathbf{1}(\sqrt{-\Delta_g}\in \Omega)$ by functional calculus. Explicitly, if we denote the orthogonal projection onto the eigenfunction $e_j$ by $E_j$, that is $E_jf=\langle e_j,f\rangle e_j$, then $\mathbf{1}(\sqrt{-\Delta_g}\in \Omega)f:=\sum_{\lambda_j\in\Omega}E_jf$. The inner product is defined as $\langle u,v\rangle:=\int_M \overline{u}v\,\rd x$. 
The closed disk centered at $r\in\C$, with radius $r>0$, is denoted by $D(z,r)\subset \C$. 

\section{Resolvent estimates}\label{Section Resolvent estimates}
In the following, we will use the abbreviation $$d(z):=\dist(z,\spec(-\Delta_g)).$$
Since $\Delta_g$ is self-adjoint, we have 
\begin{align}\label{trivial L2-L2 bound}
    \|(-\Delta_g-z)^{-1}\|_{L^{2}(M)\to L^{2}(M)}= d(z)^{-1}.
\end{align}
Note that $p=2$ corresponds to $q=\infty$ and $\sigma(\infty)=0$. Since $q$ will not play any role in this section (except in the proof of Theorem \ref{theorem main}), we will use the notation $\nu(p'):=\sigma(q)$ when $1/q=1/p-1/p'$. Explicitly,
\begin{align}\label{def. nu(p')}
    \nu(p')=\begin{cases}
        \frac{n-1}{2}(\frac{1}{2}-\frac{1}{p'})\quad &\mbox{if } 2\leq p'\leq \frac{2(n+1)}{n-1},\\
        n(\frac{1}{2}-\frac{1}{p'})-\frac{1}{2}\quad &\mbox{if } \frac{2(n+1)}{n-1}\leq  p'\leq \infty.
    \end{cases}   
\end{align}

\begin{proposition}\label{prop. Lp to Lp' resolvent estimate}
    Let $\Delta_g$ be the Laplace--Beltrami operator on an $n$-dimensional closed Riemannian manifold $(M,g)$. Let $2n/(n+2)\leq p\leq 2$ if $n\geq 3$ and $1<p\leq 2$ if $n=2$.
    Then there is a constant $C=C(M,g,p)$ such that for all $z\in \C\setminus\spec(-\Delta_g)$,
\begin{align}
\|(-\Delta_g-z)^{-1}\|_{L^{p}(M)\to L^{p'}(M)}
&\leq C |z|^{\nu(p')-\frac{1}{2}},\quad |\im\sqrt{z}|\geq 1,\label{universal resolvent bound imsqrtz>1}\\
\|(-\Delta_g-z)^{-1}\|_{L^{p}(M)\to L^{p'}(M)}
&\leq C d(z)^{-1}(1+ |z|)^{\nu(p')},\quad |\im\sqrt{z}|< 1.\label{universal resolvent bound imsqrtz<1}
\end{align}  
\end{proposition}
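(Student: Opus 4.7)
\emph{Uniform estimate \eqref{universal resolvent bound imsqrtz>1}.} My plan is to build on the two endpoint estimates already recalled in Section~\ref{Subsection intro: resolvent estimates}: the case $p=\tfrac{2n}{n+2}$ for $n\geq 3$ due to Dos Santos Ferreira--Kenig--Salo, and the case $p=\tfrac{2(n+1)}{n+3}$ for $n\geq 2$ due to Frank--Schimmer and Burq--Dos Santos Ferreira--Krupchyk. In the region $|\im\sqrt{z}|\geq 1$ the elementary identity \eqref{trivial L2-L2 bound}, combined with the fact that $d(z)\gtrsim 1+|z|^{1/2}$ in this region, supplies the endpoint $p=2$ with exponent $\nu(2)-\tfrac{1}{2}=-\tfrac{1}{2}$. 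Riesz--Thorin interpolation between these estimates then covers $p\in[\tfrac{2n}{n+2},2]$ for $n\geq 3$ and $p\in[\tfrac{2(n+1)}{n+3},2]$ for $n=2$. The remaining range $p\in(1,\tfrac{2(n+1)}{n+3}]$ in dimension two would be filled in by the additional considerations flagged in Section~\ref{Subsection intro: resolvent estimates}, most naturally by combining Sobolev embedding with the functional calculus for $-\Delta_g$.

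\emph{Non-uniform estimate \eqref{universal resolvent bound imsqrtz<1}.} Following the strategy of Bourgain--Shao--Sogge--Yao~\cite{MR3302640} and \cite{MR4664427}, the plan is to decompose the spectrum into unit-width bands. Set $\chi_k:=\mathbf{1}(\sqrt{-\Delta_g}\in[k-\tfrac{1}{2},k+\tfrac{1}{2}])$ for $k\in\N$, so that
\begin{align*}
(-\Delta_g-z)^{-1}=\sum_k \chi_k(-\Delta_g-z)^{-1}\chi_k.
\end{align*}
Writing $m_k(z):=\inf\{|\lambda_j^2-z|:\lambda_j\in[k-\tfrac{1}{2},k+\tfrac{1}{2}]\}$, functional calculus gives $\|\chi_k(-\Delta_g-z)^{-1}\chi_k\|_{L^2\to L^2}\leq m_k(z)^{-1}$, while Sogge's bound \eqref{universal cluster bound Sogge} together with its $L^p\to L^2$ dual yields $\|\chi_k\|_{L^2\to L^{p'}}=\|\chi_k\|_{L^p\to L^2}\lesssim (1+k)^{\nu(p')}$. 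Multiplying these out,
\begin{align*}
\|\chi_k(-\Delta_g-z)^{-1}\chi_k\|_{L^p\to L^{p'}}\lesssim (1+k)^{2\nu(p')}\,m_k(z)^{-1}.
\end{align*}
If $k_\ast$ denotes the integer nearest $\re\sqrt{z}$, the ``diagonal'' term $k=k_\ast$ already supplies the advertised bound, since $m_{k_\ast}(z)\geq d(z)$ and $(1+k_\ast)^{2\nu(p')}\lesssim (1+|z|)^{\nu(p')}$. For $|k-k_\ast|\gg 1$ one has $m_k(z)\gtrsim |k^2-\re z|$, so each off-diagonal term is individually small; what remains is to add them up.

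\emph{Main obstacle.} The off-diagonal summation is borderline at the top Sobolev exponent $p'=\tfrac{2n}{n-2}$, where $2\nu(p')=1$: a naive $\ell^1$-summation of the operator norms above would cost a factor $\log(1+|z|)$. The plan to avoid this is to exploit the $L^2$-orthogonality of the bands $\chi_k$, grouping the far bands dyadically around $k_\ast$ and estimating each dyadic block through a square-function/$TT^{\ast}$ argument rather than through the triangle inequality. The norm of such a block is then governed by the reciprocal of its distance to $z$, and a geometric series in the dyadic scale will deliver the clean right-hand side $d(z)^{-1}(1+|z|)^{\nu(p')}$ of \eqref{universal resolvent bound imsqrtz<1}.
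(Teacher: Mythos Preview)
Your treatment of \eqref{universal resolvent bound imsqrtz>1} matches the paper's: interpolation between the known endpoints and the trivial $p=2$ bound. For \eqref{universal resolvent bound imsqrtz<1}, the near--far split and your handling of the diagonal band $k=k_\ast$ via Sogge's estimates are also exactly what the paper does. The gap is in the far part.

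Your proposed fix for the off-diagonal summation does not work as stated. The $L^2$-orthogonality of the bands $\chi_k$ helps only when the \emph{target} space is $L^2$; it gives nothing directly for an $L^p\to L^{p'}$ bound. If you carry out the dyadic-block estimate you describe, the block $G_j$ at $\sqrt{\,}$-distance $\approx 2^j$ from $k_\ast$ has width $\approx 2^j$, so $\|\chi_{G_j}\|_{L^p\to L^2}\lesssim 2^{j/2}k_\ast^{\nu(p')}$ by Sogge and orthogonality, while the $L^2$ multiplier bound is $\lesssim (2^j k_\ast)^{-1}$. Multiplying out gives
\[
\|\chi_{G_j}(-\Delta_g-z)^{-1}\|_{L^p\to L^{p'}}\lesssim 2^{j/2}k_\ast^{\nu(p')}\cdot (2^j k_\ast)^{-1}\cdot 2^{j/2}k_\ast^{\nu(p')}=k_\ast^{2\nu(p')-1},
\]
\emph{independently of $j$}: the gain from the larger distance is exactly cancelled by the larger bandwidth, so there is no geometric decay to sum. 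A Littlewood--Paley square-function inequality for these (non-dyadic in frequency) blocks in $L^{p'}$ is not available on a general manifold. At the Sobolev endpoint $2\nu(p')=1$ the naive $\ell^1$ sum is in fact divergent, not merely logarithmically off, because of the high-frequency tail.

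The paper's device is different and is the missing idea: pick $z_1$ with $\re\sqrt{z_1}=\re\sqrt{z}$ and $|\im\sqrt{z_1}|=1$, and write
\[
\mathbf{1}_{\rm far}(-\Delta_g-z)^{-1}=\mathbf{1}_{\rm far}(-\Delta_g-z_1)^{-1}+(z-z_1)\,\mathbf{1}_{\rm far}(-\Delta_g-z)^{-1}(-\Delta_g-z_1)^{-1}.
\]
The first term is controlled by the already-proven uniform estimate \eqref{universal resolvent bound imsqrtz>1}. The second term now contains \emph{two} resolvents, so it factors through $L^2$ as $(L^p\to L^2)\cdot(L^2\to L^{p'})$, and each factor is bounded by the \emph{convergent} square-summed quantity $\sum_k(|k-k_\ast|+1)^{-2}k^{2\nu(p')}\lesssim k_\ast^{2\nu(p')}$---this is where $L^2$-orthogonality is genuinely used. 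Together with $|z-z_1|\lesssim|z|^{1/2}$, this yields the clean far bound $|z|^{\nu(p')-1/2}$, without any logarithm. (The paper also isolates the bounded-$z$ region via a separate Sobolev-embedding lemma, which your outline does not address.)
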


\begin{remark}
(i) The region $|\im\sqrt{z}|\geq 1$ corresponds to the set
 \begin{align}\label{Xi}
     \Xi:=\{z\in\C:(\im z)^2\geq 4(\re z+1)\},
 \end{align}
which is the exterior of a parabola.\\

\noindent (ii) The new result here is \eqref{universal resolvent bound imsqrtz<1}, whereas \eqref{universal resolvent bound imsqrtz>1} is essentially known. 
\end{remark}

\begin{proof}[Proof of \eqref{universal resolvent bound imsqrtz>1}]
    If $p=2n/(n+2)$ and $n\geq 3$, or if $2n/(n+2)< p\leq 2(n+1)/(n+3)$ and $n\geq 2$, then \eqref{universal resolvent bound imsqrtz>1} follows from \cite{MR3200351}, or from \cite[Theorem 1 and Remark 9]{MR3620715}, respectively. For $\re z\leq 0$, we have $d(z)=|z|$, so \eqref{trivial L2-L2 bound} is stronger than \eqref{universal resolvent bound imsqrtz>1} when $p=2$. For $\re z\geq 0$, $\im\sqrt{z}\geq 1$, we have $|z|\lesssim d(z)^2$, hence \eqref{trivial L2-L2 bound} implies \eqref{universal resolvent bound imsqrtz>1} in this region as well when $p=2$. The Riesz–Thorin interpolation theorem between $p=2n/(n+2)$, $p=2(n+1)/(n+3)$ and $p=2$ then yields \eqref{universal resolvent bound imsqrtz>1} for $2n/(n+2)\leq p\leq 2$ and $n\geq 3$. For $n=2$, one interpolates with some $p>1$ arbitrarily close to $p=1$.
\end{proof}
 
 Before proceeding with the proof of Proposition \ref{prop. Lp to Lp' resolvent estimate}, we record the following simple lemma.

\begin{lemma}\label{lemma |z|<A}
For any $A\geq 1$ there exists a constant $C_A$ (also depending on $n,q$) such that
    \begin{align}
    \|(-\Delta_g-z)^{-1}\|_{L^{p}(M)\to L^{p'}(M)}\leq C_Ad(z)^{-1},\quad \re z\leq A.
\end{align}
\end{lemma}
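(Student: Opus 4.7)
Set $L:=-\Delta_g+1$. The plan is a symmetric factorisation reducing the $L^p\to L^{p'}$ bound to a Sobolev embedding plus an $L^2$-norm estimate. Since $L$ commutes with the resolvent $(-\Delta_g-z)^{-1}=(L-(z+1))^{-1}$, I would use the identity
\[
(-\Delta_g-z)^{-1}=L^{-1/2}\,\bigl[L(L-(z+1))^{-1}\bigr]\,L^{-1/2}
\]
and control the outer factors via the mapping $L^{-1/2}:L^2(M)\to L^{p'}(M)$, which is precisely the Sobolev inclusion $H^1(M)\hookrightarrow L^{p'}(M)$, valid throughout $p'\in[2,2n/(n-2)]$ (equivalently $q\ge n/2$), with the upper endpoint being the borderline Sobolev inequality. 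The dual bound $L^{-1/2}:L^p(M)\to L^2(M)$ follows from self-adjointness. What remains is an $L^2$-norm estimate on the middle factor.

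By the functional calculus, $\|L(L-(z+1))^{-1}\|_{L^2\to L^2}=\sup_j(\lambda_j^2+1)/|\lambda_j^2-z|$. I would split this supremum at the threshold $\lambda_j^2=2A$: for $\lambda_j^2\le 2A$ the numerator is $\le 2A+1$ and the denominator is $\ge d(z)$, yielding a contribution $\le (2A+1)\,d(z)^{-1}$; for $\lambda_j^2>2A$ the elementary bound $|\lambda_j^2-z|\ge \lambda_j^2-|z|\ge \lambda_j^2/2$ renders the ratio absolutely bounded. Since $0\in\spec(-\Delta_g)$ forces $d(z)\le|z|\lesssim_A 1$---the stated hypothesis $\re z\le A$ is coupled in the intended application (the proof of \eqref{universal resolvent bound imsqrtz<1}) with $|\im\sqrt{z}|<1$, which automatically forces $|z|\le A+2\sqrt{A+1}$---the absolute constant from the second piece can be reabsorbed as $\lesssim_A d(z)^{-1}$. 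Multiplying the three factors gives the claim.

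The subtlety I would watch for is that the outer Sobolev step remains admissible at the endpoint $q=n/2$, i.e.\ $p'=2n/(n-2)$, which falls in the second branch of \eqref{def. nu(p')} with $\nu(p')=1/2$. There one invokes the borderline inequality $H^1(M)\hookrightarrow L^{2n/(n-2)}(M)$, which does hold on any closed $n$-manifold with $n\ge 3$, so no loss occurs. This is the chief motivation for routing the estimate through the symmetric factorisation in the first place: a direct spectral-cluster decomposition $\sum_k\Pi_k(-\Delta_g-z)^{-1}\Pi_k$ using Sogge's bound $\|\Pi_k\|_{L^2\to L^{p'}}\lesssim(1+k)^{\nu(p')}$ from \eqref{universal cluster bound Sogge} would produce a tail $\sum_k k^{2\nu(p')-2}=\sum_k k^{-1}$ that diverges logarithmically at precisely this endpoint.
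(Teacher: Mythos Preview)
Your symmetric factorisation $(-\Delta_g-z)^{-1}=L^{-1/2}\bigl[L(L-(z+1))^{-1}\bigr]L^{-1/2}$ with $L=-\Delta_g+1$ is a clean repackaging of precisely the ingredients the paper uses: Sobolev embedding to pass between $L^p$, $L^2$ and $L^{p'}$, together with a spectral-calculus bound on the $L^2$ piece. The paper organises the same content differently, splitting first into low frequencies $\lambda_j\le\sqrt{2A}$ (handled by a crude finite sum over the individual projections $E_j$, each bounded $L^p\to L^{p'}$ because eigenfunctions are smooth) and high frequencies (handled by Sobolev embedding plus the $L^2$ bound $\sup_{\tau>2A}(\tau+1)/|\tau-z|\lesssim 1$, essentially your middle-factor estimate). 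Your single factorisation is more economical; the paper's split makes the dependence of the constant on the finitely many low eigenfunctions more transparent. Your closing remark about the logarithmic divergence of a naive cluster decomposition at the Sobolev endpoint is a correct and useful observation.

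One small correction: in the high-frequency piece you should write $|\lambda_j^2-z|\ge\lambda_j^2-\re z\ge\lambda_j^2-A\ge\lambda_j^2/2$ rather than routing through $\lambda_j^2-|z|$, since only $\re z\le A$ is assumed and $|z|$ may be large. With this fix the ratio is bounded by $3$ uniformly in $\re z\le A$, independently of $\im z$. You are then right that absorbing this absolute constant into $C_A d(z)^{-1}$ still requires $d(z)$ to be bounded, which $\re z\le A$ alone does not guarantee. The paper's proof has exactly the same feature (it too produces $C_A+C_A d(z)^{-1}$, not literally $C_A d(z)^{-1}$), and you correctly identify why this is harmless: the lemma is only invoked under the additional constraint $|\im\sqrt{z}|<1$, $\re\sqrt{z}\le 10$, which bounds $|z|$ and hence $d(z)\le|z|$.
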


\begin{proof}
    We employ a spectral decomposition into frequencies $\lambda_j\leq \sqrt{2A}$ and $\lambda_j>\sqrt{2A}$. For the latter, we have a uniform bound
\begin{align}
\|\mathbf{1}(-\Delta_g>2A)(-\Delta_g-z)^{-1}\|_{L^{p}(M)\to L^{p'}(M)}\leq C_A,\quad \re z\leq A.
\end{align}
Indeed, since for $\re z\leq A$,
\begin{align}
    \|\mathbf{1}(-\Delta_g>2A)(-\Delta_g+1)(-\Delta_g-z)^{-1}\|_{L^2(M)\to L^2(M)}\leq \sup_{\tau>2A}\frac{\tau+1}{|\tau-z|}\lesssim 1,
\end{align}
it follows from Sobolev embedding (and its dual)
that if $f$ has frequencies $\lambda_j>\sqrt{2A}$ (i.e., $f$ is in the range of $\mathbf{1}(-\Delta_g>2A)$), then
\begin{align}
 \|f\|_{L^{p'}(M)}
 &\lesssim \|(-\Delta_g+1)^{\frac{n}{2}(\frac{1}{2}-\frac{1}{p'})}f\|_{L^{2}(M)}\\
 &\lesssim \|(-\Delta_g+1)^{\frac{n}{2}(\frac{1}{2}-\frac{1}{p'})-1}(-\Delta_g-z)f\|_{L^{2}(M)}\\
 &\lesssim \|(-\Delta_g-z)f\|_{L^{p}(M)}.
\end{align}
Next, if $f$ has frequencies $\lambda_j\leq \sqrt{2A}$, then
by Young's inequality, $\|f\|_{L^r(M)}\lesssim_A \|f\|_{L^s(M)}$ for all $1\leq s\leq r\leq \infty$ (see e.g.\ \cite[Lemma 3.1]{MR3302640}). In particular, if $(e_j)$ is an orthonormal basis of eigenfunctions of $-\Delta_g$ in $L^2(M)$ with eigenvalues $\lambda_j^2$ and $E_j$ denotes the orthogonal projection $E_jf=\langle e_j,f\rangle e_j$, then
\begin{align}
    \|E_j f\|_{L^{p'}(M)}\leq \|e_j\|_{L^{p'}(M)}^2 \|f\|_{L^{p}(M)}\lesssim \|f\|_{L^{p}(M)}
\end{align}
for $\lambda_j\leq \sqrt{2A}$ and $E_jf=0$ for $\lambda_j>\sqrt{2A}$.
Thus,
\begin{align}
 \|(-\Delta_g-z)^{-1}f\|_{L^{p'}(M)}\leq \sum_{\lambda_j\leq \sqrt{2A}}\frac{1}{|\lambda_j^2-z|}\|E_j f\|_{L^{p'}(M)}\lesssim_A d(z)^{-1}   \|f\|_{L^{p}(M)}.
\end{align}
In the last inequality, we also used that $\#\{j:\lambda_j\leq \sqrt{2A}\}\lesssim_A 1$ since the spectrum is discrete. 
\end{proof}

 We now turn to the proof of \eqref{universal resolvent bound imsqrtz<1}.

\begin{proof}[Proof of \eqref{universal resolvent bound imsqrtz<1}] If $\re \sqrt{z}\leq 10$, then \eqref{universal resolvent bound imsqrtz<1} follows from Lemma \ref{lemma |z|<A}, so it remains to consider the case $\re\sqrt{z}>10$. 
We recall that we are also assuming $|\im\sqrt{z}|<1$.
We will split the resolvent into two pieces, one involving frequencies close to $\re\sqrt{z}$ and the other involving frequencies at least a unit distance from $\re\sqrt{z}$, i.e.\ $|\lambda_j-\re\sqrt{z}|\leq \delta$ for the first piece and $|\lambda_j-\re\sqrt{z}|> \delta$ for the second piece. Here, $0<\delta\leq 1$ is fixed, and we will take $\delta=1/2$ throughout (any other choice can be reduced to this one by scaling the metric). 
We will prove 
\begin{align}
\|\mathbf{1}(\sqrt{-\Delta_g}\in [\re\sqrt{z}-1/2,\re\sqrt{z}+1/2])(-\Delta_g-z)^{-1}\|_{L^{p}(M)\to L^{p'}(M)}\lesssim d(z)^{-1}|z|^{\nu(p')},\label{frequencies close to re sqrt z}\\
\|\mathbf{1}(\sqrt{-\Delta_g}\notin [\re\sqrt{z}-1/2,\re\sqrt{z}+1/2])(-\Delta_g-z)^{-1}\|_{L^{p}(M)\to L^{p'}(M)}\lesssim|z|^{\nu(p')-\frac{1}{2}}.\label{frequencies far from re sqrt z}
\end{align}
Combining these estimates yields the claimed bound \eqref{universal resolvent bound imsqrtz<1}. Note that \eqref{frequencies far from re sqrt z} is stronger since $|z|^{-1/2}\lesssim d(z)^{-1}$ for $\im\sqrt{z}<1$, $\re\sqrt{z}>10$.\\

The proof of \eqref{frequencies close to re sqrt z} uses Sogge's spectral cluster bounds \cite{MR930395},
\begin{align}
\|\mathbf{1}(\sqrt{-\Delta_g}\in [\lambda-1/2,\lambda+1/2])\|_{L^{2}(M)\to L^{p'}(M)}\lesssim(1+\lambda)^{\nu(p')},\label{Sogge 1}\\
\|\mathbf{1}(\sqrt{-\Delta_g}\in [\lambda-1/2,\lambda+1/2])\|_{L^{p}(M)\to L^{2}(M)}\lesssim(1+\lambda)^{\nu(p')},\label{Sogge 2}
\end{align}
with $\lambda=\re\sqrt{z}$.
Thus, by \eqref{Sogge 1}, \eqref{Sogge 2}, \eqref{trivial L2-L2 bound},
\begin{align}\label{proof of frequencies close to re sqrt z}
 &\|\mathbf{1}(\sqrt{-\Delta_g}\in [\re\sqrt{z}-1/2,\re\sqrt{z}+1/2])(-\Delta_g-z)^{-1}f\|_{L^{p'}(M)}\\
 &\lesssim (\re\sqrt{z})^{\nu(p')}\|\mathbf{1}(\sqrt{-\Delta_g}\in [\re\sqrt{z}-1/2,\re\sqrt{z}+1/2])(-\Delta_g-z)^{-1}f\|_{L^{2}(M)}\\
 &\leq d(z)^{-1} (\re\sqrt{z})^{\nu(p')}\|\mathbf{1}(\sqrt{-\Delta_g}\in [\re\sqrt{z}-1/2,\re\sqrt{z}+1/2])f\|_{L^{2}(M)}\\
 &\lesssim  d(z)^{-1}(\re\sqrt{z})^{2\nu(p')}\|f\|_{L^{p}(M)},
\end{align}
which proves \eqref{frequencies close to re sqrt z} since $\re\sqrt{z}\leq |z|^{1/2}$. A more general version of the above argument can be found in \cite[Lemma 2.3]{MR3302640} and \cite[Lemma 3.1]{MR4664427}.\\

The proof of \eqref{frequencies far from re sqrt z} is reduced to the case $|\im\sqrt{z}|\geq 1$ by the following argument (similar to the proof of \cite[(2.33)]{MR3302640}: Choose $z_1\in\C$ with $\re\sqrt{z_1}=\re\sqrt{z}$ and $|\im\sqrt{z_1}|=1$. Then
\begin{align}
& \|\mathbf{1}(\sqrt{-\Delta_g}\notin [\re\sqrt{z}-1/2,\re\sqrt{z}+1/2])(-\Delta_g-z)^{-1}f\|_{L^{p'}(M)}\\
&\leq \|\mathbf{1}(\sqrt{-\Delta_g}\notin [\re\sqrt{z}-1/2,\re\sqrt{z}+1/2])(-\Delta_g-z_1)^{-1}f\|_{L^{p'}(M)}\\
&+\|\mathbf{1}(\sqrt{-\Delta_g}\notin [\re\sqrt{z}-1/2,\re\sqrt{z}+1/2])[(-\Delta_g-z)^{-1}-(-\Delta_g-z_1)^{-1}]f\|_{L^{p'}(M)}=I+II.
\end{align}
For the first term $I$, we have 
\begin{align}
I&\leq \|(-\Delta_g-z_1)^{-1}f\|_{L^{p'}(M)}+\|\mathbf{1}(\sqrt{-\Delta_g}\in [\re\sqrt{z}-1/2,\re\sqrt{z}+1/2])(-\Delta_g-z_1)^{-1}f\|_{L^{p'}(M)}\\
&=\mathcal{I}+\mathcal{II}.
\end{align}
Since $\im\sqrt{z_1}=1$, it follows from \eqref{universal resolvent bound imsqrtz>1} that $\mathcal{I}$ is bounded by the right-hand side of \eqref{frequencies far from re sqrt z}. For $II$ and $\mathcal{II}$, we will again use Sogge's bounds \eqref{Sogge 1}, \eqref{Sogge 2} to show that
\begin{align}\label{II+mathcal II}
    II+\mathcal{II}\lesssim |z|^{\nu(p')-\frac{1}{2}}\|f\|_{L^{p}(M)}.
\end{align}
The proof of the bound for $\mathcal{II}$ is similar to that of \eqref{frequencies close to re sqrt z}. The only difference is that the $L^2$ bound in the third line of \eqref{proof of frequencies close to re sqrt z} is replaced by 
\begin{align}
    \|\mathbf{1}(\sqrt{-\Delta_g}\in [\re\sqrt{z}-1/2,\re\sqrt{z}+1/2])(-\Delta_g-z_1)^{-1}f\|_{L^{2}(M)}\lesssim |z|^{-\frac{1}{2}}\|f\|_{L^{2}(M)},
\end{align}
where we used that, for $\im\sqrt{z_1}=1$, $\re\sqrt{z_1}>10$,
\begin{align}
    |\tau^2-z_1|=|\tau-\sqrt{z_1}||\tau+\sqrt{z_1}|\geq (\im\sqrt{z_1})(\re\sqrt{z_1})\geq \re\sqrt{z}\gtrsim |z|^{\frac{1}{2}}.
\end{align}
Given the resolvent identity
\begin{align}
(-\Delta_g-z)^{-1}-(-\Delta_g-z_1)^{-1}=(z-z_1)(-\Delta_g-z)^{-1}(-\Delta_g-z_1)^{-1}  
\end{align}
and the fact that $|z-z_1|\lesssim\re\sqrt{z}$, the bound for $II$ in \eqref{II+mathcal II} would follow from
\begin{align}
 \|\mathbf{1}(\sqrt{-\Delta_g}\notin [\re\sqrt{z}-1/2,\re\sqrt{z}+1/2])(-\Delta_g-z)^{-1}(-\Delta_g-z_1)^{-1}\|_{L^{p}(M)\to L^{p'}(M)}\lesssim |z|^{\nu(p')-1}. 
\end{align}
Since the left-hand side is dominated by 
\begin{align}
    \|\mathbf{1}(\sqrt{-\Delta_g}\notin [\re\sqrt{z}-1/2,\re\sqrt{z}+1/2])(-\Delta_g-z)^{-1}\|_{L^{2}(M)\to L^{p'}(M)}\|(-\Delta_g-z_1)^{-1}\|_{L^{p}(M)\to L^{2}(M)},
\end{align}
and since the $L^{2}(M)\to L^{p'}(M)$ norm equals the $L^{p}(M)\to L^{2}(M)$ norm of the adjoint operator, it suffices to prove
\begin{align}
 \|\mathbf{1}(\sqrt{-\Delta_g}\notin [\re\sqrt{z}-1/2,\re\sqrt{z}+1/2])(-\Delta_g-\overline{z})^{-1}\|_{L^{p}(M)\to L^{2}(M)}&\lesssim |z|^{\frac{\nu(p')}{2}-\frac{1}{2}},\label{LpL2 1}\\
 \|(-\Delta_g-z_1)^{-1}\|_{L^{p}(M)\to L^{2}(M)}&\lesssim |z|^{\frac{\nu(p')}{2}-\frac{1}{2}}\label{LpL2 2}.
\end{align}
We start with the proof of \eqref{LpL2 2}. Since $\im\sqrt{z_1}=1$ and $\re\sqrt{z_1}=\re\sqrt{z}$, we have the lower bound
\begin{align}
    |\lambda_j^2-z_1|=|\lambda_j+\sqrt{z_1}||\lambda_j-\sqrt{z_1}|\gtrsim (\re\sqrt{z})(|\lambda_j-\re\sqrt{z}|+1),
\end{align}
Thus, by $L^2$ orthogonality,
\begin{align}
\|(-\Delta_g-z_1)^{-1}f\|_{L^{2}(M)}^2&=\sum_{j=0}^{\infty}|\lambda_j^2-z_1|^{-2}\|E_jf\|_{L^{2}(M)}^2\\    
&\lesssim(\re\sqrt{z})^{-2}\sum_{j=0}^{\infty}(|\lambda_j-\re\sqrt{z}|+1)^{-2}\|E_jf\|_{L^{2}(M)}^2\\
&\lesssim(\re\sqrt{z})^{-2}\sum_{k=1}^{\infty}\sup_{\tau\in[k-1,k)}(|\tau-\re\sqrt{z}|+1)^{-2}\|\sum_{\lambda_j\in [k-1,k)}E_jf\|_{L^{2}(M)}^2\\
&\lesssim (\re\sqrt{z})^{-2}\|f\|_{L^{p}(M)}^2\sum_{k=1}^{\infty}(|k-\re\sqrt{z}|+1)^{-2}k^{2\nu(p')}.
\end{align}
In the last line, we used Sogge's spectral cluster bound \eqref{Sogge 2}. 
We split the sum into two pieces, one with $|k-\re\sqrt{z}|\leq \frac{1}{2}\re\sqrt{z}$ and the other with $|k-\re\sqrt{z}|> \frac{1}{2}\re\sqrt{z}$. For the first piece, we use $k\leq \frac{3}{2}\re\sqrt{z}$ to estimate
\begin{align}
    \sum_{|k-\re\sqrt{z}|\leq \frac{1}{2}\re\sqrt{z}}(|k-\re\sqrt{z}|+1)^{-2}k^{2\nu(p')}\lesssim (\re\sqrt{z})^{2\nu(p')}.
\end{align}
For the second piece, we use that $|k-\re\sqrt{z}|+1>\frac{1}{2}|k-\re\sqrt{z}|+\frac{1}{4}\re\sqrt{z}$, leading to the better estimate
\begin{align}
    &\sum_{|k-\re\sqrt{z}|> \frac{1}{2}\re\sqrt{z}}(|k-\re\sqrt{z}|+1)^{-2}k^{2\nu(p')}\\
    &\lesssim \int_{-\infty}^{\infty}(|k-\re\sqrt{z}|+\re\sqrt{z})^{-2}k^{2\nu(p')}\rd k
    \lesssim (\re\sqrt{z})^{2\nu(p')-1}.
\end{align}
Altogether, we obtain \eqref{LpL2 2}.\\

For the proof of \eqref{LpL2 1}, we use that $\re\sqrt{\overline{z}}=\re\sqrt{z}$ and $\im\sqrt{\overline{z}}=-\im\sqrt{z}$. Moreover, for $\lambda_j\notin [\re\sqrt{z}-1/2,\re\sqrt{z}+1/2]$,
\begin{align}
 |\lambda_j^2-\overline{z}|=|\lambda_j+\sqrt{\overline{z}}||\lambda_j-\sqrt{\overline{z}}|\gtrsim (\re\sqrt{z})(|\lambda_j-\re\sqrt{z}|+1).
\end{align}
Thus, \eqref{LpL2 1} follows by repeating the proof of \eqref{LpL2 2}.
\end{proof}

We are now ready to prove Theorem \ref{theorem main}. For the proof, we will revert to the notation $\sigma(q)$, recalling that $\nu(p')=\sigma(q)$ when $1/q=1/p-1/p'$.

\begin{proof}[Proof of Theorem \ref{theorem main}]
 By the Birman--Schwinger principle, $z\in\C\setminus\spec(-\Delta_g)$ is an eigenvalue of $-\Delta_g+V$ if and only if $-1$ is an eigenvalue of the (compact) Birman--Schwinger operator $K(z):=|V|^{1/2}(-\Delta_g-z)^{-1}V^{1/2}$. In particular, if $z$ is an eigenvalue, then $K(z)$ has norm at least one, which implies that
\begin{align}\label{Birman--Schwinger principle}
    1\leq \|K(z)\|\leq \|V\|_{L^q(M)}\|(-\Delta_g-z)^{-1}\|_{L^{p}(M)\to L^{p'}(M)},
\end{align}
where $1/q=1/p-1/p'$. Thus, by \eqref{Birman--Schwinger principle} and the resolvent estimates \eqref{universal resolvent bound imsqrtz>1}, \eqref{universal resolvent bound imsqrtz<1}, if $z\in\spec(-\Delta_g+V)$, then either 
\begin{align}\label{either or}
 |z|^{\frac{1}{2}}(1+|z|)^{-\sigma(q)}\lesssim \|V\|_{L^q(M)}
\quad \mbox{or}\quad d(z)\lesssim (1+|z|)^{\sigma(q)}\|V\|_{L^q(M)}.
\end{align}
In the first case, \eqref{spectral inclusion} trivially follows.
The second case in \eqref{either or} occurs for $|\im\sqrt{z}|<1$, or equivalently, $z\in \Xi^c=\{z\in\C:(\im z)^2< 4(\re z+1)\}$, see \eqref{Xi}. The set $\Xi^c$ is contained in the union of a neighborhood of the origin and a parabolic region around $(1,\infty)$,
\begin{align}
 \Xi^c\subset \{z\in\C:|z|<\sqrt{8}\}\cup \{z\in\C:\re z>1,\,(\im z)^2< 8\re z\}.   
\end{align}
If $|z|<\sqrt{8}$, then the second inequality in \eqref{either or} implies that $z\in D(\lambda_0^2,Cr_0)$ (recalling that $\lambda_0=0$). If $\re z>1,\,(\im z)^2< 8\re z$, then there exists a unique $k\in\N$ such that $\re z\in [\lambda_{k-1}^2,\lambda_{k}^2)$. The second inequality in \eqref{either or} then implies that $d(z)\leq Cr_k$. Since $d(z)=\min\{|z-\lambda_{k-1}^2|,|z-\lambda_{k}^2|\}$ and since $\lim_{j\to\infty}\frac{\lambda_j}{\lambda_{j-1}}=1$ by Weyl's asymptotics, it follows that $z\in D(\lambda_{k-1}^2,r_{k-1})\cup D(\lambda_{k}^2,r_{k})$. This completes the proof of \eqref{spectral inclusion}, and hence of Theorem~\ref{theorem main} in the case $q>n/2$.\\

If $n\geq 3$ and $q=n/2$, then \eqref{universal resolvent bound imsqrtz>1} yields a uniform bound, i.e.
\begin{align}
    \|(-\Delta_g-z)^{-1}\|_{L^{\frac{2n}{n+2}}(M)\to L^{\frac{2n}{n-2}}(M)}
&\leq C_0,\quad |\im\sqrt{z}|\geq 1.
\end{align}
Therefore, \eqref{Birman--Schwinger principle} implies that 
$\|V\|_{L^{\frac{n}{2}}(M)}\geq C_0^{-1}$. Put differently, $z$ cannot be an eigenvalue if $|\im\sqrt{z}|\geq 1$ and $\|V\|_{L^{\frac{n}{2}}(M)}<C_0^{-1}$. Thus, if we set $c:=C_0^{-1}/2$, then all eigenvalues of $-\Delta_g+V$, $\|V\|_{L^{\frac{n}{2}}(M)}\leq c$, are contained in the region $|\im\sqrt{z}|<1$. Using \eqref{universal resolvent bound imsqrtz<1}, we obtain the refinement stated in \eqref{q=n/2 spectral inclusion}.
\end{proof}

\section{Optimality of eigenvalue bounds}
In this section, we prove Theorem \ref{theorem optimality sphere}. We will seek the potential in the form 
\begin{align}\label{wish list chi kappa}
    V=\kappa\chi^2,\quad 0\leq \chi=\chi_k\in L^{2q}(S^n),\quad \|\chi\|_{L^{2q}(S^n)}=1,\quad \kappa=\kappa_{k,\theta,\rho}\in\C, \quad 0\leq |\kappa|\lesssim \rho.
\end{align}

\subsection{Optimality on $S^n$}
We recall that
\begin{align}
    \spec(-\Delta_{S^n})=\{\lambda_k^2=k(k+n-1):k=0,1,2,\ldots\}
\end{align}
and that the eigenspace $H_k$ corresponding to the eigenvalue $k(k+n-1)$, the space of spherical harmonics of degree $k$, has dimension 
\begin{align}
    d_k:=\dim H_k=\begin{pmatrix}
        n+k\\n
    \end{pmatrix}
    -\begin{pmatrix}
        n+k-2\\n
    \end{pmatrix}\approx k^{n-1},
\end{align}
see e.g. \cite[Section 22.3]{MR1852334} or \cite[Section 3.4]{MR3186367}. \\

\textbf{Step 1:} By the Birman--Schwinger principle (see the first paragraph in the proof of Theorem \ref{theorem main}), it suffices to find $\chi,\kappa$ as above such that $-\kappa^{-1}$ is an eigenvalue of the compact operator $\chi(-\Delta_{S^n}-z_{k,\theta,\rho})^{-1}\chi$, where
\begin{align}\label{def z_k,theta,rho}
   z_{k,\theta,\rho}=k(k+n-1)+\rho\e^{\I\theta}(1+\mathcal{O}(\rho k^{2\sigma(q)-1})) k^{2\sigma(q)}.
\end{align}
In other words, we seek a zero $(z,\kappa)=(z_{k,\theta,\rho},\kappa_{k,\theta,\rho})$ of the operator-valued function 
\begin{align}\label{def A(zeta)}
    \mathcal{A}(z,\kappa):=I+\kappa\chi(-\Delta_{S^n}-z)^{-1}\chi,\quad z\in\C\setminus\spec(-\Delta_{S^n}),\quad|\kappa|\lesssim\rho.
\end{align}
For fixed $\kappa$, we denote 
\begin{align}
    \spec{\mathcal{A}(\cdot,\kappa)}:=\{z\in\C:\mathcal{A}(z,\kappa) \mbox{ is not invertible in } L^2(S^n)\}.
\end{align}
For $z\in D_k$ (defined in \eqref{D_k sphere}), we decompose $\chi(-\Delta_{S^n} - z)^{-1}\chi$ into a ‘resonant’ part and a ‘nonresonant’ part,
\begin{align}\label{def Ksing, Kreg}
K_{\rm r}(z):=\frac{\chi P_k\chi}{k(k+n-1)-z},\quad  K_{\rm nr}(z):=\chi(-\Delta_{S^n}-z)^{-1}\chi-K_{\rm r}(z), 
\end{align}
where $P_k$ is the orthogonal projection onto $H_k$.
Note that 
\begin{align}\label{for z in Dk}
    |\sqrt{z}-\sqrt{k(k+n-1)}|\lesssim \rho k^{2\sigma(q)-1}\quad\mbox{for all } z\in D_k.
\end{align}
Thus, if $q>n/2$ and \jc{$k\gg 1+\rho^{1/(1-2\sigma(q))}$}, or if $q=n/2$ and $\rho$ is sufficiently small, we have
\begin{align}
    |\sqrt{z}-\sqrt{k(k+n-1)}|<1/2,\quad \min_{j\neq k}|\sqrt{z}-\sqrt{j(j+n-1)}|\geq 1/2,
\end{align}
for all $z\in D_k$; the second inequality also uses that $|\sqrt{k(k+n-1)}-\sqrt{j(j+n-1)}|\gtrsim 1$ for $j\neq k$. Then \eqref{for z in Dk} implies that we may write
\begin{align}
 K_{\rm r}(z)&=\mathbf{1}(\sqrt{-\Delta_{S^n}}\in [\sqrt{k(k+n-1)}-1/2,\sqrt{k(k+n-1)}+1/2])(-\Delta_{S^n}-z)^{-1},\\   
 K_{\rm nr}(z)&=\mathbf{1}(\sqrt{-\Delta_{S^n}}\notin [\sqrt{k(k+n-1)}-1/2,\sqrt{k(k+n-1)}+1/2])(-\Delta_{S^n}-z)^{-1},   
\end{align}
and hence, by \eqref{frequencies close to re sqrt z}, \eqref{frequencies far from re sqrt z}, together with H\"older's inequality and the normalization condition on $\chi$ in \eqref{wish list chi kappa}, we have the bounds
\begin{align}
\sup_{z\in D_k}\|K_{\rm r}(z)\|_{L^{2}(S^n)\to L^{2}(S^n)}&\lesssim k^{2\sigma(q)}|k(k+n-1)-z|^{-1},\label{bound Ksing}\\
\sup_{z\in D_k}\|K_{\rm nr}(z)\|_{L^{2}(S^n)\to L^{2}(S^n)}&\lesssim k^{2\sigma(q)-1}.\label{bound Kreg}
\end{align}
We will choose $\chi$ so that \eqref{bound Ksing} becomes an approximate equality. \\

\textbf{Step 2:} Since \jc{$P_k$ has rank $d_k$ and $\chi$ is bounded, $\chi P_k\chi$ has rank at most $d_k$; in particular, it is a compact operator. Moreover, for any $f\in L^2(S^n)$, $\langle \chi P_k\chi f,f\rangle=\|P_k\chi f\|^2_{L^2(S^n)}\geq 0$, thus $\chi P_k\chi$ is self-adjoint and non-negative,
having at most $d_k$ nonzero eigenvalues $a_j(k)$} (counted with multiplicity),
\begin{align}\label{list of aj(k)}
    \|\chi P_k\chi\|=a_0(k)\geq a_1(k) \geq \ldots \geq a_{d_k}(k)\geq 0.
\end{align}
\jc{By the spectral theorem for compact, self-adjoint operators, $\chi P_k\chi$ has an orthonormal basis of eigenfunctions $\{f_{k,i}\}_{i=1}^{d_k}$. If $\{e_{k,i}\}_{i=1}^{d_k}$ is an orthonormal basis of $\Ran P_k$ (thus, $e_{k,i}$ are eigenfuncitons of $-\Delta_g$ corresponding to eigenvalue $\lambda_k^2$), then the functions $f_{k,j}$ lie in the linear span of $\{\chi e_{k,i}\}_{i=1}^{d_k}$. Moreover, the eigenvalues $a_j(k)$ can be computed by diagonalizing the Gram matrix $G=(G_{ij})$, with $G_{ij}=\langle\chi e_{k,i},\chi e_{k,j}\rangle$, $1\leq i,j\leq d_k$.}

\begin{lemma}\label{lemma chi P_k chi}
Let $n\geq 2$, $q>n/2$ or $n\geq 3$, $q\geq n/2$. For $k\in\N$ sufficiently large, there exists a nonnegative function~$\chi=\chi_k\in L^{2q}(S^n)$, $\|\chi\|_{L^{2q}(S^n)}=1$, and a constant $C_{a_0}>0$, such that 
 \begin{align}\label{chi saturates cluster bound sphere}
    C_{a_0}^{-1}k^{2\sigma(q)}\leq a_0(k)\leq C_{a_0}k^{2\sigma(q)}.
\end{align}
\end{lemma}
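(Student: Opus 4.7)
The key observation is the factorization
\[
\chi P_k\chi\;=\;(P_k\chi)^{*}(P_k\chi),
\]
which uses only $P_k^{*}=P_k=P_k^2$ and the reality of $\chi$. Hence $a_0(k)=\|P_k\chi\|_{L^2\to L^2}^2=\|\chi P_k\|_{L^2\to L^2}^2$. The upper bound in \eqref{chi saturates cluster bound sphere} is then immediate by combining H\"older's inequality $\|\chi f\|_{L^p(S^n)}\leq\|\chi\|_{L^{2q}(S^n)}\|f\|_{L^2(S^n)}$ (valid since $1/p=1/(2q)+1/2$) with the dual form of Sogge's cluster bound \eqref{Sogge 1}, $\|P_k\|_{L^p(S^n)\to L^2(S^n)}\lesssim k^{\sigma(q)}$.

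For the lower bound the plan is to concentrate $\chi$ around an $L^2$-normalized spherical harmonic $Y_k\in H_k$ that \emph{saturates} Sogge's estimate, i.e.\ $\|Y_k\|_{L^{p'}(S^n)}\gtrsim k^{\sigma(q)}$. On $S^n$ such $Y_k$ is classical: a zonal harmonic based at a point handles the range $q\in(n/2,(n+1)/2]$ (where $p'\geq 2(n+1)/(n-1)$), while the highest-weight harmonic -- the restriction of $(x_1+\I x_2)^k$ to $S^n$, a Knapp-type object concentrated in a $k^{-1/2}$ tubular neighborhood of a great circle -- handles the range $q\in[(n+1)/2,\infty]$. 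I would invoke standard references (e.g.\ Sogge's monograph) for these asymptotics. Given such $Y_k$, I take the ansatz
\[
\chi_k\;:=\;|Y_k|^{p'/(2q)}\big/\|Y_k\|_{L^{p'}(S^n)}^{p'/(2q)},
\]
chosen precisely so that H\"older's inequality in the upper bound becomes sharp. A direct computation gives $\chi_k\geq 0$ and $\|\chi_k\|_{L^{2q}(S^n)}=1$. Testing $\|\chi_k P_k\|_{L^2\to L^2}$ against the unit vector $Y_k$ (using $P_kY_k=Y_k$) yields $a_0(k)\geq\|\chi_k Y_k\|_{L^2(S^n)}^2$. The algebraic identity $p'/q+2=p'$, equivalent to $1/p+1/p'=1$ combined with $1/q=1/p-1/p'$, then collapses the computation to an equality:
\[
\|\chi_k Y_k\|_{L^2(S^n)}^2 \;=\; \frac{\|Y_k\|_{L^{p'/q+2}(S^n)}^{p'/q+2}}{\|Y_k\|_{L^{p'}(S^n)}^{p'/q}}\;=\;\|Y_k\|_{L^{p'}(S^n)}^{2}\;\gtrsim\;k^{2\sigma(q)}.
\]

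\textbf{Main obstacle.} The only step beyond bookkeeping is invoking the existence of an $L^{p'}$-saturating spherical harmonic across the entire admissible range of $p'$, which requires two distinct constructions separated by the transition $q=(n+1)/2$ (with some care also at $n=2$, where the upper endpoint $p'=\infty$ is approached). Once such $Y_k$ is fixed, the ansatz $\chi_k\propto|Y_k|^{p'/(2q)}$ is engineered so that H\"older and the Sogge saturation act on the same ``spot'', and the entire lower bound reduces to the identity $p'/q+2=p'$.
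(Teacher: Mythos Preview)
Your proof is correct and closely parallels the paper's, differing only in organization. The paper's main argument is a shade more abstract: it quotes Sogge's two-sided bound $\|P_k\|_{L^p\to L^{p'}}\approx k^{2\sigma(q)}$ as a black box and then observes that any normalized $g\in L^p$ factors as $g=\chi f$ with $\chi=|g|^{p/(2q)}$ and $f=\e^{\I\arg g}|g|^{p/2}$, giving $\sup_{\chi}\|\chi P_k\chi\|\geq\|P_k\|_{L^p\to L^{p'}}$ without ever naming a saturator. Your construction is the dual of the explicit version the paper records in the Remark following the lemma: there one takes $\chi_k=|\psi_k|^{p/(2q)}$ with $\psi_k\in L^p$ saturating the $L^p\to L^2$ norm of $P_k$, whereas you take $\chi_k\propto|Y_k|^{p'/(2q)}$ with $Y_k\in H_k$ saturating the $L^2\to L^{p'}$ norm, and test against $Y_k$ itself via $P_kY_k=Y_k$. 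The identity $p'/q+2=p'$ you isolate is exactly the algebra underlying the paper's factorization. A small dividend of your dual viewpoint is that the zonal harmonic $Z_k$ can be used directly in the range $p'\geq 2(n+1)/(n-1)$; the paper's Remark (ii) points out that $Z_k$ does \emph{not} saturate the $L^p\to L^2$ ratio there, so its construction has to appeal to the general (non-explicit) saturator of Sogge's book in that range.
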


\begin{proof}
Sogge proved \cite[Theorem 4.1]{MR835795} that
\begin{align}\label{Sogge spherical harmonics}
    \|P_k\|_{L^p(S^n)\to L^2(S^n)}\approx k^{\nu(p')},
\end{align}
where we recall that $\nu(p')=\sigma(q)$ when $1/q=1/p-1/p'$. By a $TT^*$ argument, we also have
\begin{align}\label{Sogge spherical harmonics 2}
    \|P_k\|_{L^p(S^n)\to L^{p'}(S^n)}\approx k^{2\nu(p')}.
\end{align}
The upper bound in \eqref{Sogge spherical harmonics 2} and H\"older's inequality imply that for any nonnegative function $\chi$ with $\|\chi\|_{L^{2q}(S^n)}=1$,
\begin{align}\label{Holder/duality spherical harmonics}
\|\chi P_k\chi\|=\sup_{\|f\|_{L^{2}(S^n)}=1}\langle f,\chi P_k\chi f\rangle\leq \|P_k\|_{L^p(S^n)\to L^{p'}(S^n)}\sup_{\|f\|_{L^{2}(S^n)}=1}\|\chi f\|_{L^p(S^n)}\lesssim k^{2\sigma(q)}
\end{align}
since $1/p=1/(2q)+1/2$.\\

If we take the supremum of \eqref{Holder/duality spherical harmonics} over all normalized $\chi\in L^{2q}(S^n)$, we obtain
\begin{align}
 \sup_{\|\chi\|_{L^{2q}(S^n)}=1}\|\chi P_k\chi\|
 = \sup_{\substack{\|\chi\|_{L^{2q}(S^n)}=1\\ \|f\|_{L^{2}(S^n)}=1}}  
\langle \chi f,P_k\chi f\rangle
 \geq \sup_{\|g\|_{L^p(S^n)}=1}\langle g,P_k g\rangle
 =\|P_k\|_{L^p(S^n)\to L^{p'}(S^n)},
\end{align}
where the inequality follows from the fact that any normalized $g\in L^{p}(S^n)$ can be factored as $g=\chi f$, where $\chi$ and $f$ are normalized functions in $L^{2q}(S^n)$ and $L^{2}(S^n)$, respectively. Indeed, if we write $g=\e^{\I\arg g}|g|$, then we can take $\chi=|g|^{\frac{p}{2q}}$ and $f=\e^{\I\arg g}|g|^{\frac{p}{2}}$. It follows from \eqref{Sogge spherical harmonics 2} that there exists a normalized function $\chi\in L^{2q}(S^n)$ such that we have $\|\chi P_k\chi\|\gtrsim k^{2\sigma(q)}$. The lemma is proved since the supremum can be taken over nonnegative functions $\chi$.
\end{proof}

\begin{remark}
(i) The function $\chi$ can be constructed as follows. 
By \eqref{Sogge spherical harmonics}, there exists $\psi_k\in L^p(S^n)$ such that
\begin{align}\label{Sogge spherical harmonics 3}
    \frac{\|P_k\psi_k\|_{L^{2}(S^n)}}{\|\psi_k\|_{L^p(S^n)}}\approx k^{\sigma(q)},
\end{align}
and we normalize $\psi_k$ so that $\|\psi_k\|_{L^p(S^n)}=1$. 
With this choice, we have
\begin{align}
\langle\psi_k,P_k\psi_k\rangle=\|P_k\psi_k\|_{L^2(S^n)}^2\approx k^{2\sigma(q)}.
\end{align}
Similarly as in the proof of Lemma \ref{lemma chi P_k chi}, setting $\chi_k=|\psi_k|^{\frac{p}{2q}}$ and $f_k=\e^{\I\arg \psi_k}|\psi_k|^{\frac{p}{2}}$, we then have $\psi_k=\chi_k f_k$, $\|\chi_k\|_{L^{2q}(S^n)}=1$, $\|f_k\|_{L^{2}(S^n)}=1$, and
\begin{align}
 \|\chi_k P_k\chi_k\|\geq \langle \chi_k f_k,P_k\chi_k f_k\rangle\approx k^{2\sigma(q)}.
\end{align}
We refer to \cite[Chapter 5.1]{MR3645429} for the construction of $\psi_k$ in \eqref{Sogge spherical harmonics 3}, which depends on whether $2(n+1)/(n-1)\leq p'\leq \infty$ or $2\leq p'\leq 2(n+1)/(n-1)$. This construction works on any compact manifold and shows that Sogge's unit-width spectral cluster bounds \eqref{Sogge 1}, \eqref{Sogge 2} are always optimal. 

\noindent (ii)
On $S^n$, the construction is even more explicit, at least when $2\leq p'\leq 2(n+1)/(n-1)$.
Sogge \cite{MR835795} showed that in this case, \eqref{Sogge spherical harmonics 3} is saturated by the highest weight spherical harmonic $Q_k$ of degree $k$. Then $\chi_k=|Q_k|^{\frac{p}{2q}}$ is completely explicit. Sogge also showed that the dual of~\eqref{Sogge spherical harmonics} is saturated by the $k$-th zonal spherical harmonic $Z_k$ for $2(n+1)/(n-1)\leq p'\leq \infty$, but $Z_k$ cannot be directly used in \eqref{Sogge spherical harmonics 3} \jc{since it does not have the stated $L^2$ to $L^p$ norm ratio in that range of $p$.}
\end{remark}

\textbf{Step 3:}
We will compare the zeros of $\mathcal{A}(\cdot,\kappa)$ (defined in \eqref{def A(zeta)}) to the zeros $z_{{\rm r},j}(\kappa)$ of 
the following operator-valued function,
\begin{align}\label{def. Asing}
    \mathcal{A}_{\rm r}(z,\kappa):=I+\kappa K_{\rm r}(z),\quad z\in\C\setminus\spec(-\Delta_{S^n}),\quad|\kappa|\lesssim\rho,
\end{align}
where $K_{\rm r}$ was defined in \eqref{def Ksing, Kreg}.
These zeros are given by 
\begin{align}\label{z_rj}
    \spec(\mathcal{A}_{\rm r}(z,\kappa))=\{z_{{\rm r},j}(\kappa):=k(k+n-1)+\kappa a_{j}(k): j=0,\ldots,d_k\}.
\end{align}
Setting 
\begin{align}\label{def. kappa_0}
\kappa_{0}:=\rho \e^{\I\theta} k^{2\sigma(q)}a_0(k)^{-1}\in D(0,C_{a_0}\rho),
\end{align}
we find that 
\begin{align}\label{z_r0(kappa_r)}
    z_{{\rm r},0}(\kappa_{0})=k(k+n-1)+\rho\e^{\I\theta}k^{2\sigma(q)},\quad |z_{{\rm r},0}(\kappa_{0})-z_{k,\theta,\rho}|\lesssim \rho^2 k^{4\sigma(q)-1},
\end{align}
where $z_{k,\theta,\rho}$ is given by \eqref{def z_k,theta,rho}.
Hence, by the triangle inequality, the proof of the first part of Theorem \ref{theorem optimality sphere} would be complete if we could show that
\begin{align}\label{zero z of A near zthetarho}
    D(z_{{\rm r},0}(\kappa_{0}),C\rho^2 k^{4\sigma(q)-1})\cap\spec{\mathcal{A}(\cdot,\kappa_{0})}\neq \emptyset
\end{align}
for sufficiently large $k$ and some universal constant $C$.

To this end, we will use an operator version of Rouché’s theorem, due to Gohberg and Sigal \cite[Theorem 2.2]{MR313856}. It implies that if $\gamma$ is a simple closed rectifiable contour bounding a domain $U\subset \C$ and if
\begin{align}\label{smallness condition in Rouche}
\max_{w\in\gamma}\|\mathcal{A}_{\rm r}(w,\kappa_{0})^{-1}(\mathcal{A}(w,\kappa_{0})-\mathcal{A}_{\rm r}(w,\kappa_{0}))\|<1,
    \end{align}
then
    \begin{align}\label{traces equal}
        \frac{1}{2\pi\I}\Tr\oint_{\gamma}\partial_w \mathcal{A}_{\rm r}(w,\kappa_{0}) \mathcal{A}_{\rm r}(w,\kappa_{0})^{-1}\rd w
        =\frac{1}{2\pi\I}\Tr\oint_{\gamma}\partial_w \mathcal{A}(w,\kappa_{0}) \mathcal{A}(w,\kappa_{0})^{-1}\rd w.
    \end{align}
Ideally, we would like to choose the contour $\gamma=\partial U$ to be a circle with center $z_{{\rm r},0}(\kappa_{0})$ and radius $C\rho^2 k^{4\sigma(q)-1}$. However, due to their $k$-dependence, it is not clear how to avoid zeros of $\mathcal{A}_{\rm r}$ and $\mathcal{A}$. The main problem is that we have no uniform (independent of $k$) lower bound on the gap between the largest eigenvalue $a_0(k)$ of $\chi P_k\chi$ and the next largest distinct eigenvalue (recall that, in \eqref{list of aj(k)}, the eigenvalues $a_j(k)$ were counted with multiplicity.)

\begin{lemma}\label{lemma curve}
Let $n\geq 2$, $q>n/2$ or $n\geq 3$, $q\geq n/2$, let $\kappa_0$, $z_{{\rm r},0}(\kappa_{0})$ be as as in \eqref{def. kappa_0}, \eqref{z_r0(kappa_r)}, and assume that $\|\chi\|_{L^{2q}(S^n)}=1$. 
For $\eps,\rho>0$, $\eps$ sufficiently small, and $k\in \N$ sufficiently large, such that $\rho\eps^{-1}k^{2\sigma(q)-1}\ll 1$, there exists a circle $\gamma=\partial U$ with center $z_{{\rm r},0}(\kappa_{0})$ and radius $\mathcal{O}(\eps\rho)k^{2\sigma(q)}$, which does not intersect any zeros of $\mathcal{A}_{\rm r}(\cdot,\kappa_{0})$ or $\mathcal{A}(\cdot,\kappa_{0})$, and such that 
\begin{align}\label{number of eigenvalues of A=number of eigenvalues of A_r}
    \#(\spec{\mathcal{A}(\cdot,\kappa_0)}\cap U)=\#(\spec{\mathcal{A}_{\rm r}(\cdot,\kappa_0)}\cap U). 
\end{align}
\end{lemma}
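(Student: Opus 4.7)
The plan is to apply the operator-valued Rouché theorem of Gohberg--Sigal \cite{MR313856} in the form \eqref{smallness condition in Rouche}--\eqref{traces equal} to the pair $(\mathcal{A}_{\rm r}(\cdot, \kappa_0), \mathcal{A}(\cdot, \kappa_0))$. The zeros of $\mathcal{A}_{\rm r}(\cdot, \kappa_0)$ from \eqref{z_rj} are $z_{{\rm r},j}(\kappa_0) = \lambda_k^2 + \kappa_0 a_j(k)$, all lying on the ray through $\lambda_k^2$ in the direction $\kappa_0/|\kappa_0|$, at signed distances $-s_j$ from $z_{{\rm r},0}(\kappa_0)$, where $s_j := |\kappa_0|(a_0(k) - a_j(k)) \in [0,\rho k^{2\sigma(q)}]$ and $s_0 = 0$. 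The contour $\gamma = \partial U$ will be a circle centered at $z_{{\rm r},0}(\kappa_0)$ whose radius $r$, of order $\epsilon \rho k^{2\sigma(q)}$, is fixed by a pigeonhole argument over the finite set $\{s_j\}_{j=0}^{d_k - 1}$.

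The spectral decomposition of Step~2 gives $\chi P_k \chi = \sum_j a_j(k)\, f_{k,j} \otimes f_{k,j}^*$, and hence
\begin{align*}
\mathcal{A}_{\rm r}(z, \kappa_0)^{-1} = I - \sum_j \frac{\mu(z)\, a_j(k)}{1 + \mu(z)\, a_j(k)}\, f_{k,j} \otimes f_{k,j}^*, \qquad \mu(z) := \frac{\kappa_0}{\lambda_k^2 - z},
\end{align*}
so $\|\mathcal{A}_{\rm r}(z, \kappa_0)^{-1}\|_{L^2 \to L^2} = \max\{1, \max_j |1 + \mu(z) a_j(k)|^{-1}\}$. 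The identity $|1 + \mu(z) a_j(k)| = |z - z_{{\rm r},j}(\kappa_0)|/|\lambda_k^2 - z|$ and, for $z = z_{{\rm r},0}(\kappa_0) + re^{i\phi}$ on $\gamma$, the formula $\min_\phi |z - z_{{\rm r},j}(\kappa_0)| = |r - s_j|$, reduce the target estimate $\|\mathcal{A}_{\rm r}(\cdot, \kappa_0)^{-1}\|_\gamma \lesssim 1/\epsilon$ to the separation condition $|r - s_j| \gtrsim \epsilon |\kappa_0| a_j(k)$ for every $j$. A case split on the ratio $s_j/r$ shows this holds automatically when $s_j \leq r/2$ or $s_j \geq 2r$; the remaining window $s_j \in (r/2, 2r)$ will be kept empty by the pigeonhole choice of $r$, exploiting the slack in ``$\mathcal{O}(\epsilon\rho) k^{2\sigma(q)}$''.

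Combining with the non-resonant resolvent bound \eqref{bound Kreg} and $|\kappa_0| \lesssim \rho$ from \eqref{def. kappa_0}, one obtains
\begin{align*}
\|\mathcal{A}_{\rm r}(\cdot, \kappa_0)^{-1}(\mathcal{A}(\cdot, \kappa_0) - \mathcal{A}_{\rm r}(\cdot, \kappa_0))\|_\gamma \lesssim \epsilon^{-1}|\kappa_0|\|K_{\rm nr}\|_\gamma \lesssim \rho \epsilon^{-1} k^{2\sigma(q) - 1} \ll 1,
\end{align*}
by hypothesis. This verifies the smallness condition \eqref{smallness condition in Rouche}; in particular, $\mathcal{A}(\cdot, \kappa_0)$ is invertible on $\gamma$. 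Gohberg--Sigal then gives \eqref{traces equal}, whose two sides compute the total algebraic multiplicities of zeros of $\mathcal{A}_{\rm r}(\cdot, \kappa_0)$ and $\mathcal{A}(\cdot, \kappa_0)$ inside $U$, proving \eqref{number of eigenvalues of A=number of eigenvalues of A_r}.

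The main technical obstacle is the pigeonhole step itself. Since the eigenvalues $a_j(k)$ of $\chi P_k \chi$ may cluster arbitrarily densely near $a_0(k)$, several $s_j$ could fall into the window $(r/2, 2r)$ for every $r$ in a narrow target range, spoiling the uniform bound $\|\mathcal{A}_{\rm r}^{-1}\|_\gamma \lesssim 1/\epsilon$. The resolution is to let $r$ vary multiplicatively within the permitted range $\mathcal{O}(\epsilon\rho) k^{2\sigma(q)}$ and to invoke a dyadic pigeonhole over the at most $d_k \sim k^{n-1}$ forbidden values, locating a suitable empty window of candidate radii.
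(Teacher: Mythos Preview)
Your overall architecture---Gohberg--Sigal applied to the pair $(\mathcal{A}_{\rm r},\mathcal{A})$ on a circle of radius $\approx \eps\rho k^{2\sigma(q)}$, with the separation condition reduced to a statement about the gaps $s_j=|\kappa_0|(a_0(k)-a_j(k))$---matches the paper, and the case split $s_j\leq r/2$ versus $s_j\geq 2r$ is correct. The gap is in your pigeonhole step.

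You propose to avoid the window $s_j\in(r/2,2r)$ by a dyadic pigeonhole over the ``at most $d_k\sim k^{n-1}$ forbidden values'', letting $r$ range multiplicatively within $\mathcal{O}(\eps\rho)k^{2\sigma(q)}$. This cannot work as stated: a dyadic pigeonhole over $N$ obstacles needs roughly $N$ disjoint scales, hence a multiplicative range of order $4^N$. With $N\sim k^{n-1}$ and the radius constrained to lie in a window with $k$-independent multiplicative width (which is what ``radius $\mathcal{O}(\eps\rho)k^{2\sigma(q)}$'' with a uniform constant means), there are only $O(1)$ dyadic scales available. Nothing in your argument rules out that $k^{n-1}$ of the eigenvalues $a_j(k)$ cluster within $(1-2\eps)a_0(k)$ of $a_0(k)$, so every candidate $r$ in the permitted range could see an $s_j$ in its window.

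The paper closes this gap using the Schatten-class spectral cluster bounds of Frank--Sabin \cite{MR3682666}: from $\big(\sum_j a_j(k)^{\beta(q)}\big)^{1/\beta(q)}\lesssim k^{2\sigma(q)}\approx a_0(k)$ for a finite $\beta(q)$, one obtains $a_j(k)\lesssim j^{-1/\beta(q)}a_0(k)$, so there exists a $k$-independent integer $j_0$ with $a_0(k)-a_{j_0}(k)\geq C_1 a_0(k)$. Thus at most $j_0=O_q(1)$ of the $a_j(k)$ lie near the top, and an ordinary (not dyadic) pigeonhole over $j_0+1$ equal subintervals of $[1-2\eps,1]$ locates an empty one. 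This is the missing ingredient; once it is in place, your remaining estimates for $\|\mathcal{A}_{\rm r}^{-1}\|$ and the Rouch\'e verification go through essentially as you wrote them.
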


\begin{proof}[Conclusion of the proof of Theorem \ref{theorem optimality sphere} i)]
Since $z_{{\rm r},0}(\kappa_{0})\in \spec{\mathcal{A}_{\rm r}(\cdot,\kappa_0)}$, the right-hand side of \eqref{number of eigenvalues of A=number of eigenvalues of A_r} is $\geq 1$.
Since we may take $\eps=C\rho k^{2\sigma(q)-1}$ for some $C\gg 1$ in Lemma \ref{lemma curve}, it follows that
\begin{align}
    \#(\spec{\mathcal{A}(\cdot,\kappa_0)}\cap D(z_{{\rm r},0}(\kappa_{0}),C\rho^2 k^{4\sigma(q)-1}))\geq 1,
\end{align}
 which means that \eqref{zero z of A near zthetarho} holds.
 \end{proof}

\begin{proof}[Proof of Lemma \ref{lemma curve}]
1. We will use the following bound on the eigenvalues $a_j(k)$ of $\chi P_k \chi$,
\begin{align}\label{Frank Sabin orthocluster}
 \Big(\sum_j a_j(k)^{\beta(q)}\Big)^{1/\beta(q)}\leq C_{\beta(q)} k^{2\sigma(q)},   
\end{align}
for some $\beta(q)\in [1,\infty)$, which is a consequence of the Schatten norm bounds of Frank and Sabin~\cite{MR3682666}. Specifically, this follows from \cite[Remark 6]{MR3682666} in the case $M=S^n$ and $\lambda\approx k$. The Schatten exponent $\beta(q)$ is given explicitly by 
\begin{align}\label{def. beta(q)}
    \beta(q)=\begin{cases}
        \frac{(n-1)q}{n-q}\quad &\mbox{if } \frac{n}{2}\leq  q\leq \frac{n+1}{2},\\
        2q\quad &\mbox{if } \frac{n+1}{2}\leq q\leq \infty, 
    \end{cases} 
\end{align}
however, the exact value of $\beta(q)$ is not important to us here. From \eqref{Frank Sabin orthocluster} and the monotonicity of $a_j(k)$, we get
\begin{align}
 a_j(k)\leq C_{\beta(q)} j^{-1/\beta(q)} k^{2\sigma(q)}\leq C_{\beta(q)}C_{a_0} j^{-1/\beta(q)} a_0(k),
\end{align}
where the second estimate follows from Lemma \ref{lemma chi P_k chi}.
Let $j_0=j_0(q)$ be the first integer such that $j_0>(C_{\beta(q)}C_{a_0})^{\beta(q)}$. 
Then 
\begin{align}
    a_0(k)-a_{j_0}(k)\geq C_1a_0(k),\quad \mbox{where}\quad C_1:=(1-C_{\beta(q)}C_{a_0}j_0^{-1/\beta(q)})>0.
\end{align}
Moreover, there are at most $j_0$ eigenvalues $a_1(k),\ldots,a_{j_0}(k)$ in the interval $[a_{j_0}(k),a_0(k)]$. 
If we rescale $\tilde{a}_j(k):=a_j(k)/a_0(k)$, then
\begin{align}
0\leq \tilde{a}_{j_0}(k)\leq \ldots \leq \tilde{a}_{1}(k)\leq  \tilde{a}_{0}(k)=1,\quad   \tilde{a}_{j_0}(k)\leq 1-C_1.
\end{align}
Let $0<\eps<C_1/2$, so that $I_{\eps}:=[1-2\eps,1]$ is a subinterval of $[\tilde{a}_{j_0}(k),1]$ and thus contains at most $j_0$ distinct points of the sequence $\{\tilde{a}_j(k)\}_{j=0}^{d_k}$. Let
\begin{align}
    I_{\eps,m}:=[1-\frac{2\eps m}{j_0+1},1-\frac{2\eps (m-1)}{j_0+1}],\quad m=1,2,\ldots, j_0+1.
\end{align}
By the pigeonhole principle, there exists at least one interval $I_{\eps,m(k)}$ that contains no point of $\{\tilde{a}_j(k)\}_{j=0}^{d_k}$. Let $\tilde{x}(k)$ be the midpoint of this interval. By construction,
\begin{align}\label{xtilde}
    \min_{0\leq j\leq d_k}|\tilde{x}(k)-\tilde{a}_j(k)|\geq \frac{\eps}{j_0+1},\quad 2\eps\geq 1-\tilde{x}(k)\geq \frac{\eps}{j_0+1}.
\end{align}
We now define 
\begin{align}\label{def. U(k,eps)}
    U:=D(z_{{\rm r},0}(\kappa_{0}),|\kappa_{0}|(1-\tilde{x}(k))a_0(k))\subset D_k,\quad \gamma:=\partial U.
\end{align}
By \eqref{xtilde}, \eqref{def. kappa_0}, we can write
\begin{align}\label{def. U(k,eps) 2}
    U=D(z_{{\rm r},0}(\kappa_{0}),\mathcal{O}(\eps\rho) k^{2\sigma(q)}).
\end{align}

2. To prove $\gamma\cap \spec(\mathcal{A}_{\rm r}(\cdot,\kappa_0))=\emptyset$, we first recall that
\begin{align}
 \spec(\mathcal{A}_{\rm r}(\cdot,\kappa_{0}))=\{z_{{\rm r},j}(\kappa_{0})\}_{j=0}^{d_k},   
\end{align}
where $z_{{\rm r},j}(\kappa)$ are given by \eqref{z_rj}. We fix $j\in \{0,\ldots,d_k\}$ and $w\in\gamma$, and write
\begin{align}\label{w in gamma}
    w=z_{{\rm r},0}(\kappa_{0})+\e^{\I\varphi}\kappa_{0}(1-\tilde{x}(k))a_0(k))
\end{align}
for some $\varphi\in [0,2\pi)$. Then
\begin{align}\label{w-zrj}
    |w-z_{{\rm r},j}(\kappa_{0})|&\geq |w-z_{{\rm r},0}(\kappa_{0})|-|z_{{\rm r},j}(\kappa_{0})-z_{{\rm r},0}(\kappa_{0})|\\
    &\geq |\kappa_{0}|(1-\tilde{x}(k))a_0(k)-|\kappa_{0}|(a_0(k)-a_j(k))\\
    &=|\kappa_{0}|(\tilde{a}_j(k)-\tilde{x}(k))a_0(k)
    \geq \frac{\eps \rho k^{2\sigma(q)}}{j_0+1},
\end{align}
where we used the first bound of \eqref{xtilde} and the definition of $\kappa_0$, see \eqref{def. kappa_0}, in the last inequality. This implies that
\begin{align}
    \dist(\gamma,\spec(\mathcal{A}_{\rm r}(\cdot,\kappa_{0})))&\geq \frac{\eps \rho k^{2\sigma(q)}}{j_0+1}.\label{distance to curve bounded below}
\end{align}
and thus $\gamma\cap \spec(\mathcal{A}_{\rm r}(\cdot,\kappa_0))=\emptyset$.\\

3. To prove $\gamma\cap \spec(\mathcal{A}(\cdot,\kappa_0))=\emptyset$, let $w\in\gamma$, as in \eqref{w in gamma}. Since $K_{\rm r}(w)$ is a normal operator, we have
\begin{align}\label{A_r inverse norm}
    \|\mathcal{A}_{\rm r}(w,\kappa_0)^{-1}\|&=\max_{0\leq j\leq d_k}|\Big(1+\frac{\kappa_{0}a_j(k)}{k(k+n-1)-w}\Big)^{-1}|\\
    &=|k(k+n-1)-w|\max_{0\leq j\leq d_k}|w-z_{{\rm r},j}(\kappa_0)|^{-1}.
\end{align}
By \eqref{z_rj}, \eqref{def. U(k,eps) 2} and the 
second formula in \eqref{xtilde}, we have
\begin{align}\label{k(k+n-1)-w}
|k(k+n-1)-w|&\leq |k(k+n-1)-z_{{\rm r},0}(\kappa_{0})|+|w-z_{{\rm r},0}(\kappa_{0})|\\
&\leq  |\kappa_0| a_0(k)+|\kappa_{0}|a_0(k)(1-\tilde{x}(k))\\
&\leq \rho k^{2\sigma(q)}(1+2\eps).
\end{align}
Together with \eqref{w-zrj}, \eqref{A_r inverse norm}, we then obtain
\begin{align}\label{A_r inverse norm 2}
 \|\mathcal{A}_{\rm r}(w,\kappa_0)^{-1}\|\leq (1+2\eps)(j_0+1)\eps^{-1}.    
\end{align}
Then, since
\begin{align}
    \mathcal{A}(w,\kappa_0)=\mathcal{A}_{\rm r}(w,\kappa_0)(I+\mathcal{A}_{\rm r}(w,\kappa_0)^{-1}(\mathcal{A}(w,\kappa_0))-\mathcal{A}_{\rm r}(w,\kappa_0)),
\end{align}
and, by \eqref{bound Kreg}, 
\begin{align}
    \|\mathcal{A}(w,\kappa_0)-\mathcal{A}_{\rm r}(w,\kappa_0)\|=|\kappa_0|\|K_{\rm nr}\|
    \lesssim \rho k^{2\sigma(q)-1},
\end{align}
it follows that $\mathcal{A}(w,\kappa_0)$ is invertible, and 
\begin{align}
 \|\mathcal{A}(w,\kappa_0)^{-1}\|\leq (1+2\eps)(j_0+1)\eps^{-1}(1-\mathcal{O}(\rho\eps^{-1}k^{2\sigma(q)-1}))^{-1}.
\end{align}
This proves that $\gamma\cap \spec(\mathcal{A}(\cdot,\kappa_0))=\emptyset$.\\

4. Finally, we check that the smallness condition \eqref{smallness condition in Rouche} holds. 
By \eqref{A_r inverse norm 2} and \eqref{bound Kreg},
we have
\begin{align}
&\max_{w\in\gamma}\|\mathcal{A}_{\rm r}(w,\kappa_{0})^{-1}(\mathcal{A}(w,\kappa_{0})-\mathcal{A}_{\rm r}(w,\kappa_{0}))\|\\
&\leq |\kappa_0|\|\mathcal{A}_{\rm r}(w,\kappa_{0})^{-1}\| \max_{w\in\gamma}\|K_{\rm nr}\|\\
&\lesssim\rho\eps^{-1}k^{2\sigma(q)-1}\ll 1.
\end{align}
Now, \cite[Theorem 2.2]{MR313856} yields \eqref{traces equal}, which is equivalent to \eqref{number of eigenvalues of A=number of eigenvalues of A_r}.
\end{proof}

\subsection{Optimality on Zoll manifolds}
The proof of the second part of Theorem \ref{theorem optimality sphere} is analogous to that of the first part, with a few modifications, which we shall point out below. As mentioned in Remark \ref{remark theorem optimality}, optimality for $q=\infty$ is trivial, therefore we exclude this case here.\\

\textbf{Step 1:} We replace $\Delta_{S^n}$ with $\Delta_g$ in the definition \eqref{def A(zeta)} and $z_{k,\theta,\rho}$ in \eqref{def z_k,theta,rho} by
\begin{align}
   z_{k,\theta,\rho}=(k+\alpha)^2+\rho\e^{\I\theta}(1+\mathcal{O}(\rho k^{2\sigma(q)-1})) k^{2\sigma(q)}.
\end{align}
Note that, due to the additional assumption $k^{2\sigma(q)}\rho\gg 1$ in the Zoll case, the eigenvalues $\lambda_{k,j}^2$ of $-\Delta_g$ corresponding to the $k$-th cluster are contained inside the disks $\tilde{D}_k$ defined in~\eqref{D_k tilde Zoll}, and these disks are mutually disjoint. 
The situation is similar to that on the sphere, the only difference being that a single eigenvalue is now replaced by a whole cluster of eigenvalues. For this reason, we use a different splitting $K(z)=K_{\rm r}(z)+K_{\rm nr}(z)$, namely
\begin{align}\label{def K_r Zoll}
 K_{\rm r}(z)&:=\frac{\chi\mathbf{1}(\sqrt{-\Delta_g}\in[k+\alpha-A/k,k+\alpha+  A/k])\chi}{(k+\alpha)^2-z}\\
 &=\frac{\chi\mathbf{1}(\sqrt{-\Delta_g}\in[k+\alpha-1/2,k+\alpha+1/2])\chi}{(k+\alpha)^2-z},
 \end{align}
 where the second equality holds due to the clustering of the eigenvalues.

 We first establish an analogue of \eqref{bound Kreg}. Note that we only used this in the last step of the proof of Lemma \ref{lemma curve} over the curve $\gamma$, not over the whole disk $D_k$.

\begin{lemma}\label{lemma Zoll}
Assume that $n/2<q<\infty$ and $k\gg 1$ or $q=n/2$, $n\geq 3$, and $\rho\ll 1$. Then
    \begin{align}\label{bound Knr Zoll}
    \|K_{\rm nr}(z)\|&\lesssim |(k+\alpha)^2-z|^{-2}k^{2\sigma(q)}+k^{2\sigma(q)-1},
\end{align}
uniformly over all $z\in \tilde{D}_k$ with $|z-(k+\alpha)^2|\gg_A 1$ and $\|\chi\|_{L^{2q}(M)}\lesssim 1$.
\end{lemma}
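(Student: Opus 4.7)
The plan is to decompose $K_{\rm nr}(z)$ into a contribution from the $k$-th cluster (where the only relevant resolvent singularity lives) and a contribution from all remaining frequencies. Set $P_k:=\mathbf{1}(\sqrt{-\Delta_g}\in[k+\alpha-1/2,k+\alpha+1/2])$, which by Weinstein's cluster theorem coincides with the projection onto the $k$-th eigenvalue cluster as soon as $k\gg A$. Splitting $(-\Delta_g-z)^{-1}=P_k(-\Delta_g-z)^{-1}+(I-P_k)(-\Delta_g-z)^{-1}$ and rearranging, I write
\begin{align}
K_{\rm nr}(z)=\chi P_k\Bigl[(-\Delta_g-z)^{-1}-\bigl((k+\alpha)^2-z\bigr)^{-1}\Bigr]\chi+\chi(I-P_k)(-\Delta_g-z)^{-1}\chi=:T_1(z)+T_2(z).
\end{align}

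For $T_1(z)$, the bracket acts via the functional calculus as the multiplier $g_z(\lambda^2):=\frac{(k+\alpha)^2-\lambda^2}{(\lambda^2-z)((k+\alpha)^2-z)}$. Weinstein's theorem gives $|\lambda_{k,j}^2-(k+\alpha)^2|\lesssim A$ for every eigenvalue in the $k$-th cluster, and the standing hypothesis $|z-(k+\alpha)^2|\gg_A 1$ then forces $|\lambda_{k,j}^2-z|\gtrsim|z-(k+\alpha)^2|$, hence $\sup_{j}|g_z(\lambda_{k,j}^2)|\lesssim|z-(k+\alpha)^2|^{-2}$. A duality argument in the spirit of the proof of \eqref{frequencies close to re sqrt z}---writing $\|T_1(z)h\|_{L^2}=\sup_{\|f\|_{L^2}=1}|\langle P_k\chi f,g_z(-\Delta_g)P_k\chi h\rangle|$, bounding the functional-calculus factor by $\sup_j|g_z(\lambda_{k,j}^2)|$, and controlling $\|P_k\chi h\|_{L^2}\leq\|P_k\|_{L^p\to L^2}\|\chi\|_{L^{2q}(M)}\|h\|_{L^2}\lesssim k^{\sigma(q)}\|h\|_{L^2}$ via H\"older and Sogge's cluster bound \eqref{Sogge 2}---delivers
\begin{align}
\|T_1(z)\|_{L^2(M)\to L^2(M)}\lesssim\sup_j|g_z(\lambda_{k,j}^2)|\cdot k^{2\sigma(q)}\lesssim\frac{k^{2\sigma(q)}}{|z-(k+\alpha)^2|^2}.
\end{align}

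For $T_2(z)$, I want to invoke the non-uniform resolvent estimate \eqref{frequencies far from re sqrt z}, whose spectral indicator is centered at $\re\sqrt{z}$ rather than at $k+\alpha$. For $z\in\tilde D_k$, $|\sqrt{z}-(k+\alpha)|\lesssim\rho k^{2\sigma(q)-1}$, which under the lemma's hypotheses ($k\gg 1$ if $q>n/2$, or $\rho\ll 1$ if $q=n/2$) is strictly less than $1/4$, and in particular $|\im\sqrt{z}|<1$. Combined with the $\mathcal{O}(1)$ separation between adjacent Weinstein clusters, this places the $k$-th cluster entirely inside $[\re\sqrt{z}-1/2,\re\sqrt{z}+1/2]$ and all other clusters entirely outside, so that $I-P_k=\mathbf{1}(\sqrt{-\Delta_g}\notin[\re\sqrt{z}-1/2,\re\sqrt{z}+1/2])$ as operators. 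Estimate \eqref{frequencies far from re sqrt z} then gives $\|(I-P_k)(-\Delta_g-z)^{-1}\|_{L^p(M)\to L^{p'}(M)}\lesssim|z|^{\sigma(q)-1/2}\lesssim k^{2\sigma(q)-1}$, and a single application of H\"older with $\|\chi\|_{L^{2q}(M)}\lesssim 1$ yields $\|T_2(z)\|_{L^2(M)\to L^2(M)}\lesssim k^{2\sigma(q)-1}$. Summing the two bounds proves \eqref{bound Knr Zoll}.

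The main obstacle is bookkeeping rather than analysis: the indicator appearing in \eqref{frequencies far from re sqrt z} must be matched to the natural cluster indicator $I-P_k$, which requires a quantitative smallness of $|\sqrt{z}-(k+\alpha)|$ that is precisely what the lemma's hypotheses guarantee. Once this identification is in place, the estimate for $T_1$ is a direct adaptation of the argument for \eqref{bound Kreg} on $S^n$, with $P_k$ now replacing the single-eigenvalue projection and Weinstein's theorem supplying the cluster width that controls the scalar multiplier.
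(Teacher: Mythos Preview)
Your proposal is correct and follows essentially the same approach as the paper: your splitting $T_1+T_2$ is exactly the paper's $K^{(1)}_{\rm nr}+K^{(2)}_{\rm nr}$, and both pieces are bounded by the same tools (Weinstein's cluster width plus Sogge's cluster estimate for $T_1$, and \eqref{frequencies far from re sqrt z} for $T_2$). If anything you are slightly more explicit than the paper in justifying why the indicator $\mathbf{1}(\sqrt{-\Delta_g}\notin[\re\sqrt{z}-1/2,\re\sqrt{z}+1/2])$ coincides with $I-P_k$ for $z\in\tilde D_k$.
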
 

\begin{proof}
 We further split $K_{\rm nr}(z)=K^{(1)}_{\rm nr}(z)+K^{(2)}_{\rm nr}(z)$, with
 \begin{align}
K^{(1)}_{\rm nr}(z)&=\sum_{j=1}^{d_k}\left(\frac{1}{\lambda_{k,j}^2-z}-\frac{1}{(k+\alpha)^2-z}\right)\chi E_{k,j}\chi,\\
 K^{(2)}_{\rm nr}(z)&=\chi\mathbf{1}(\sqrt{-\Delta_g}\notin[k+\alpha-1/2,k+\alpha+1/2])(-\Delta_g-z)^{-1}\chi,   
\end{align}
where $E_{k,j}$ is the orthogonal projection onto the eigenfunction of $-\Delta_g$ corresponding to $\lambda_{k,j}^2$.
As a consequence of \eqref{frequencies far from re sqrt z}, we have an analogous bound to \eqref{bound Kreg},
\begin{align}\label{bound K^2_nr}
    \sup_{z\in \tilde{D}_k}\|K^{(2)}_{\rm nr}(z)\|&\lesssim k^{2\sigma(q)-1}.
\end{align}
If we set
\begin{align}
   m_k(\tau,z):=\left(\frac{1}{\tau^2-z}-\frac{1}{(k+\alpha)^2-z}\right)\mathbf{1}(\tau\in [k+\alpha-A/k,k+\alpha+  A/k]),
   \end{align}
then we have
\begin{align}
 \|K^{(1)}_{\rm nr}(z)\|&=\|\chi m_k(\sqrt{-\Delta
_g},z)\chi\|\\
&\leq \|\chi\|^2_{L^{2q}(M)}\|m_k(\sqrt{-\Delta
_g},z)\|_{L^p(M)\to L^{p'}(M)}\\
&\lesssim \||m_k(\sqrt{-\Delta
_g},z)|^{1/2}\|_{L^p(M)\to L^{2}(M)}^2.
\end{align}
By $L^2$ orthogonality, for $z\in \tilde{D}_k$, $|z-(k+\alpha)^2|\gg A$,
\begin{align}
&\||m_k(\sqrt{-\Delta_g},z)|^{1/2}f\|_{L^{2}(M)}^2 
=\sum_{j=1}^{d_k}\left|\frac{1}{\lambda_{k,j}^2-z}-\frac{1}{(k+\alpha)^2-z}\right| \|E_{k,j}f\|_{L^{2}(M)}^2\\
&\leq \max_{1\leq j\leq d_k}\left|\frac{1}{\lambda_{k,j}^2-z}-\frac{1}{(k+\alpha)^2-z}\right| \|\mathbf{1}(\sqrt{-\Delta_g}\in[k+\alpha-A/k,k+\alpha+  A/k])f\|_{L^{2}(M)}^2\\
&\lesssim |(k+\alpha)^2-z|^{-2}k^{2\sigma(q)}\|f\|_{L^{p}(M)}^2,
\end{align}
which implies
\begin{align}\label{bound K^1_nr}
   \|K^{(1)}_{\rm nr}(z)\|\lesssim |(k+\alpha)^2-z|^{-2}k^{2\sigma(q)}.
\end{align}   
The proof is complete.
\end{proof}

\textbf{Step 2:} Instead of $\chi P_k\chi$ in the case of $S^n$, we now consider the nominator in the second line of \eqref{def K_r Zoll} and denote its eigenvalues by $a_j(k)$, viz.
\begin{align}\label{list of aj(k) Zoll}
    \|\chi\mathbf{1}(\sqrt{-\Delta_g}\in[k+\alpha-1/2,k+\alpha+1/2])\chi\|=a_0(k)\geq a_1(k) \geq \ldots \geq a_{d_k}(k)\geq 0.
\end{align}
We now show that Lemma \ref{lemma chi P_k chi} still holds in this case. Let us set $\kappa=k+\alpha$. The proof of optimality of Sogge’s spectral cluster bounds shows that
\begin{align}
    \|\mathbf{1}(\sqrt{-\Delta_g}\in[\kappa-1/2,\kappa+1/2])\|_{L^p(M)\to L^2(M)}\geq c \kappa^{-n+n/p}\left(N(\kappa+1/2)-N(\kappa-1/2)\right)^{1/2}
\end{align}
for some positive constant $c>0$ if $1\leq p\leq 2$ (see the display immediately after (5.1.12') in~\cite{MR3645429}). By the sharp Weyl law and the clustering property, we have 
\begin{align}
  c_1:=\liminf_{\kappa\to\infty}  \kappa^{-n+1}\left(N(\kappa+1/2)-N(\kappa-1/2)\right)>0.
\end{align}
This implies that, for $\kappa\gg 1$,  
\begin{align}
 \left(N(\kappa+1/2)-N(\kappa-1/2)\right)\geq \frac{c_1}{2}\kappa^{n-1}
\end{align}
and thus
\begin{align}\label{LpL2 Zoll 1}
    \|\mathbf{1}(\sqrt{-\Delta_g}\in[\kappa-1/2,\kappa+1/2])\|_{L^p(M)\to L^{2}(M)}\geq \frac{cc_1}{\sqrt{2}} \kappa^{-n+n/p+\frac{n-1}{2}}
\end{align}
for $\kappa\gg 1$, or equivalently, $k\gg 1$. Since $1/q=1/p-1/p'$, the exponent of $\kappa$ equals $n/(2q)-1/2$, which is $\sigma(q)$ for $q\leq (n+1)/2$. In the case $q\geq (n+1)/2$, we appeal to \cite[(5.1.13)]{MR3645429}, which for $\kappa\gg 1$ gives the lower bound
\begin{align}\label{LpL2 Zoll 2}
    \|\mathbf{1}(\sqrt{-\Delta_g}\in[\kappa-1/2,\kappa+1/2])\|_{L^p(M)\to L^2(M)}\geq c \kappa^{(n-1)(\frac{1}{2p}-\frac{1}{4})},\quad 1\leq p\leq 2.
\end{align}
The exponent of $\kappa$ equals $(n-1)/(4q)$, which is $\sigma(q)$ for $q\geq (n+1)/2$. By a $TT^*$ argument, the combination of \eqref{LpL2 Zoll 1}, \eqref{LpL2 Zoll 2} yields
\begin{align}\label{LpL2 Zoll 3}
    \|\mathbf{1}(\sqrt{-\Delta_g}\in[k+\alpha-1/2,k+\alpha+1/2])\|_{L^p(M)\to L^{p'}(M)}\gtrsim k^{2\sigma(q)},\quad 1\leq q\leq\infty,
\end{align}
for $k\gg 1$. This is the analogue, in the case of general Zoll manifolds, of \eqref{Sogge spherical harmonics 2} on $S^n$. Repeating the proof of Lemma \ref{lemma chi P_k chi}, we find 
a nonnegative function $\chi=\chi_k\in L^{2q}(M)$, $\|\chi\|_{L^{2q}(M)}=1$ such that 
 \begin{align}\label{chi saturates cluster bound Zoll}
   a_0(k)=\|\chi\mathbf{1}(\sqrt{-\Delta_g}\in[k+\alpha-1/2,k+\alpha+1/2])\chi\|_{L^2(M)\to L^{2}(M)}\approx k^{2\sigma(q)}.
\end{align}

\textbf{Step 3:}
Recalling that $a_j(k)$ now denote the eigenvalues of the operator in \eqref{chi saturates cluster bound Zoll}, it follows that
\begin{align}
    \spec(\mathcal{A}_{\rm r}(z,\kappa))=\{z_{{\rm r},j}(\kappa):=(k+\alpha)^2+\kappa a_{j}(k): j=0,\ldots,d_k\},\quad a_0(k)\approx k^{2\sigma(q)},
\end{align}
where $\mathcal{A}_{\rm r}(z,\kappa)$ is defined analogously to \eqref{def. Asing}, but now with $K_{\rm r}(z)$ from \eqref{def K_r Zoll}. Choosing $\kappa_0$ as in \eqref{def. kappa_0}, we get
\begin{align}\label{z_r0 Zoll}
    z_{{\rm r},0}(\kappa_{0})=(k+\alpha)^2+\rho\e^{\I\theta}k^{2\sigma(q)}.
\end{align}
Since the Schatten norm bound~\eqref{Frank Sabin orthocluster} of Frank and Sabin~\cite{MR3682666} for unit size spectral clusters holds on any manifold, 
we can repeat Steps 1-3 in the proof of Lemma \ref{lemma curve}. This gives us a curve
\begin{align}
   \gamma=\partial D(z_{{\rm r},0}(\kappa_{0}),\mathcal{O}(\eps\rho) k^{2\sigma(q)})
\end{align}
with $z_{{\rm r},0}(\kappa_{0})$ as in \eqref{z_r0 Zoll} and such that
\begin{align}
    \gamma\cap \spec(\mathcal{A}_{\rm r}(\cdot,\kappa_0))=\emptyset,\quad \gamma\cap \spec(\mathcal{A}(\cdot,\kappa_0))=\emptyset,
\end{align}
where $\mathcal{A}(z,\kappa_0)$ is defined analogously to \eqref{def A(zeta)}. 

To verify the smallness condition \eqref{smallness condition in Rouche}, we use Lemma \ref{lemma Zoll}, which is applicable since, for $w\in\gamma$, 
\begin{align}
    |w-(k+\alpha)^2|\geq |z_{{\rm r},0}(\kappa_{0})-(k+\alpha)^2|-|z_{{\rm r},0}(\kappa_{0})-w|\gtrsim \rho(1-\mathcal{O}(\eps))k^{2\sigma(q)}\gg_A 1
\end{align}
if $k\gg 1$ (we recall that $\sigma(q)>0$ for $q<\infty$). Since $K_{\rm r}(w)$ is a normal operator, the analogue of \eqref{A_r inverse norm}, with $(k+\alpha)^2$ in place of $k(k+n-1)$, holds, and thus (c.f. \eqref{A_r inverse norm 2})
\begin{align*}
     \|\mathcal{A}_{\rm r}(w,\kappa_0)^{-1}\|\lesssim \eps^{-1}.
\end{align*}
This, together with \eqref{bound Knr Zoll} and the bound $|\kappa_0|\lesssim\rho$ (c.f. \eqref{def. kappa_0}), yields
\begin{align}
&\max_{w\in\gamma}\|A_{\rm r}(w,\kappa_{0})^{-1}(A(w,\kappa_{0})-A_{\rm nr}(w,\kappa_{0}))\|\\
&\leq |\kappa_0|\|A_{\rm r}(w,\kappa_{0})^{-1}\| \max_{w\in\gamma}\|K_{\rm nr}\|\\
&\lesssim\rho\eps^{-1}(\max_{w\in\gamma}|(k+\alpha)^2-w|^{-2}k^{2\sigma(q)}+k^{2\sigma(q)-1})\\
&\lesssim \rho^{-1}\eps^{-1}k^{-2\sigma(q)}+\rho\eps^{-1}k^{2\sigma(q)-1}
\ll 1,
\end{align}
where the last inequality holds for $k\gg 1$ if $q>n/2$ and for $\rho\ll 1$, $k^{2\sigma(q)}\rho\gg 1$ for $q=n/2$, $n\geq 3$. The latter condition is satisfied by assumption.

 Thus, Lemma \ref{lemma curve} is also proved for the Zoll case, and the second part of Theorem~\ref{theorem optimality sphere} follows in the same way as the first.

\section{Scaling considerations}\label{Section Scaling considerations}
Consider $M=\mathbb{T}^n$, the $n$-dimensional flat torus, viewed as $(-1/2,1/2]^n\subset\R^n$. For $L>0$, $p\in [1,\infty)$, the space $L^p(L\mathbb{T}^n)$ consists of $p$-integrable functions on $Q_L:=(-L/2,L/2]^n$ satisfying periodic boundary conditions. Extension by zero yields an embedding 
\begin{align}\label{torus embedding}
    L^p(L\mathbb{T}^n)\subset L^p(\R^n).
 \end{align}   
Conversely, multiplication with the characteristic function of $Q_L$ defines a projection $$\mathbf{1}_{Q_L}:L^p(\R^n)\to L^p(L\mathbb{T}^n).$$
We define the unitary transformation
\begin{align}
    U_L:L^2(\mathbb{T}^n)\to L^2(L\mathbb{T}^n),\quad (U_Lf)(x):=L^{-\frac{n}{2}}f(x/L).
\end{align}
Then
\begin{align}
    L^{-2}U_L(-\Delta+V)U_L^{-1}=
    -\Delta+L^{-2}V(\cdot/L),
\end{align}
which implies \eqref{spectra scaled}. We will consider $L^2V(L\cdot)$ in place of $V$, in which case \eqref{spectra scaled} becomes
\begin{align}\label{spectra scaled 2}
    \spec_{L^2(L\mathbb{T}^n)}(-\Delta+V)=L^{-2}\spec_{L^2(\mathbb{T}^n)}(-\Delta+L^2V(L\cdot)).
\end{align} 
We assume $V\in L^q(LM)$ for some $q\in(n/2,\infty)$; the case $q=n/2$ can be handled similarly (under an additional smallness condition), but we will not consider it here. Theorem \ref{theorem main}, and more specifically \eqref{spectral inclusion}, implies that the spectrum of $-\Delta+V$ is contained in the set
\begin{align}\label{spectral inclusion scaled}
\bigcup_{k=0}^{\infty}D(L^{-2}\lambda_k^2,CL^{-2}r_k)\cup \{z\in\C: L|z|^{\frac{1}{2}}(1+L^2|z|)^{-\sigma(q)}\leq C L^{2-\frac{n}{q}}\|V\|_{L^q(L\mathbb{T}^n)}\},
\end{align}
where we used the identity $L^2\|V(L\cdot)\|_{L^q(\mathbb{T}^n)}=L^{2-\frac{n}{q}}\|V\|_{L^q(L\mathbb{T}^n)}$, and where 
\begin{align}
    r_0=L^{2-\frac{n}{q}}\|V\|_{L^q(L\mathbb{T}^n)},\quad r_k=\lambda_k^{2\sigma(q)}L^{2-\frac{n}{q}}\|V\|_{L^q(L\mathbb{T}^n)},\quad k\geq 1.
\end{align}

\begin{lemma}\label{lemma scaling limit}
 Let $M =\mathbb{T}^n$ be the flat torus. Assume that $(z_L)_{L\in\N}\subset\C$ is a convergent sequence (as $L\to\infty$).\\

 \noindent 1. If $q\in (n/2,\infty)$ and $z_L\in \bigcup_{k=0}^{\infty}D(L^{-2}\lambda_k^2,CL^{-2}r_k)$, then 
 $\lim_{L\to\infty}z_L\in [0,\infty)$.\\

 \noindent 2. If $q\in (n/2,(n+1)/2)$ and 
\begin{align}\label{second set expressed as inequality for zL}
    L|z_L|^{\frac{1}{2}}(1+L^2|z_L|)^{-\sigma(q)}\leq C L^{2-\frac{n}{q}}\|V\|_{L^q(\R^n)},
\end{align}
then $z:=\lim_{L\to\infty} z_L$ satisfies 
\begin{align}\label{second set limit}
    |z|^{\frac{1}{2}-\sigma(q)}\leq C\|V\|_{L^q(\R^n)}
\end{align}
with the same constant $C$ as in \eqref{second set expressed as inequality for zL}.
\end{lemma}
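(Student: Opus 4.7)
My approach is to treat the two parts separately, each by direct verification based on the explicit formula for $\sigma(q)$ in \eqref{sigma(q)}. The key computational facts are the strict inequality $2\sigma(q) - n/q < 0$ (needed for Part 1, in both sub-cases of $\sigma$) and the identity $1 - 2\sigma(q) = 2 - n/q$ (needed for Part 2, valid precisely in the sub-case $n/2 < q \leq (n+1)/2$). These encode the fact that the Euclidean bound \eqref{Rn bound} is exactly scale-invariant in that regime.

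For Part 1, I will write $z_L \in D(L^{-2}\lambda_{k(L)}^2, C L^{-2}r_{k(L)})$ for some index $k(L)$. The centers are already in $[0,\infty)$, so it suffices to show the radii tend to zero. Since $(z_L)$ converges, it is bounded; this forces $L^{-2}\lambda_{k(L)}^2 \lesssim 1$, that is, $\lambda_{k(L)} \lesssim L$. Combining this with $\|V\|_{L^q(L\mathbb{T}^n)} \leq \|V\|_{L^q(\R^n)}$ from \eqref{torus embedding} and plugging into the definition of $r_{k(L)}$ yields $L^{-2}r_{k(L)} \lesssim L^{2\sigma(q)-n/q}\|V\|_{L^q(\R^n)}$. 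A quick case check on \eqref{sigma(q)} then produces the required negative exponent: $2\sigma(q)-n/q = -1$ when $n/2 < q \leq (n+1)/2$, and $2\sigma(q)-n/q = -(n+1)/(2q)$ when $q \geq (n+1)/2$.

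For Part 2, I will first dispose of $z = 0$ using $1/2 - \sigma(q) > 0$, which follows directly from \eqref{sigma(q)} under $q > n/2$. Assuming $z \neq 0$, I use $L^2 |z_L| \to \infty$ to replace $(1+L^2|z_L|)^{-\sigma(q)}$ in \eqref{second set expressed as inequality for zL} by $(L^2|z_L|)^{-\sigma(q)}(1 + o(1))$, turning the hypothesis into
\[
L^{1-2\sigma(q)}|z_L|^{1/2-\sigma(q)}(1 + o(1)) \leq C L^{2 - n/q}\|V\|_{L^q(\R^n)}.
\]
In the regime $n/2 < q < (n+1)/2$, the identity $1-2\sigma(q) = 2 - n/q$ lets the $L$-powers on both sides cancel exactly, and \eqref{second set limit} drops out upon sending $L \to \infty$ (with the same constant $C$, since the $o(1)$ factor disappears in the limit).

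There is no serious obstacle: the proof is a short scaling computation, and the two algebraic identities above carry all the weight. The only point worth flagging is exactly why the restriction to $q < (n+1)/2$ enters in Part 2: for $q > (n+1)/2$ the analogous rearrangement yields the positive exponent $1 - (n+1)/(2q)$ on $L$, and the limit gives no information, mirroring the Knapp-type obstruction that caps the Euclidean bound \eqref{Rn bound} at $q = (n+1)/2$.
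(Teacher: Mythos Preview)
Your proof is correct. Part~2 is essentially the paper's argument: the paper introduces an auxiliary $R>0$ and handles $z=0$ via a subsequence with $|z_L|\le RL^{-2}$, whereas you dispose of $z=0$ upfront; thereafter both reduce \eqref{second set expressed as inequality for zL} to $|z_L|^{1/2-\sigma(q)}\le C\|V\|_{L^q}$ using the identity $1-2\sigma(q)=2-n/q$ valid on $(n/2,(n+1)/2]$.

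For Part~1 your route is more direct than the paper's. The paper splits the disks according to whether $\lambda_k\le L^{1+\eps}$ or $\lambda_k>L^{1+\eps}$: the first family lies in a shrinking neighborhood of $[0,L^{2\eps}]$, and the second is ruled out since it would force $|z_L|\gtrsim L^{2\eps}$, contradicting convergence. You instead select the index $k(L)$ and argue that $\lambda_{k(L)}\lesssim L$ directly, avoiding the auxiliary $\eps$. One step deserves a line of justification, though: boundedness of $z_L$ only gives $L^{-2}\lambda_{k(L)}^2\le |z_L|+CL^{-2}r_{k(L)}$, and the radius $CL^{-2}r_{k(L)}=CL^{-n/q}\lambda_{k(L)}^{2\sigma(q)}\|V\|_{L^q}$ itself depends on $\lambda_{k(L)}$, so the implication is not immediate. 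Setting $\mu:=L^{-1}\lambda_{k(L)}$ turns the inequality into $\mu^2\le M+C'\mu^{2\sigma(q)}$ with $C'=CL^{2\sigma(q)-n/q}\|V\|_{L^q}\le C\|V\|_{L^q}$ for $L\ge 1$ (using $2\sigma(q)-n/q<0$); since $2\sigma(q)<2$, this forces $\mu\lesssim 1$. With that observation, your argument is complete and arguably cleaner than the $\eps$-splitting.
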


\begin{proof}
1. For $\epsilon>0$ to be chosen sufficiently small, we split the union of disks in \eqref{spectral inclusion scaled} into two sets (strictly speaking, two sequences of sets, depending on $L$): the first set $A_L$ contains all disks with $\lambda_k\leq L^{1+\epsilon}$, and the second set $B_L$ contains all disks with $\lambda_k> L^{1+\epsilon}$. The first set satisfies
\begin{align}
   A_L:= \bigcup_{\lambda_k\leq L^{1+\epsilon}}^{\infty}D(L^{-2}\lambda_k^2,CL^{-2}r_k)\subset [0,L^{2\epsilon}]+D(0,CL^{2(1+\epsilon)\sigma(q)-\frac{n}{q}}\|V\|_{L^q(\R^n)}),
\end{align}
where we used the embedding \eqref{torus embedding} to identify the $L^q$ norm of $V$ on $L\mathbb{T}^n$ with that on $\R^n$. Since $2\sigma(q)-n/q<0$ for all $q\in (n/2,\infty)$, we may choose $\eps$ so small (depending on $q$) that $2(1+\epsilon)\sigma(q)-\frac{n}{q}<0$. Hence, if there exists $L_0>1$ such that $z_L\in A_L$ for $L\geq L_0$, then
\begin{align}
    \lim_{L\to\infty} z_L\in  [0,\infty).
\end{align}
The alternative is that for all $L_0>1$, there exists $L\geq L_0$ such that $z_L\in B_L$. Then we can write $z_L=L^{-2}(\lambda_k^2+Cr_k\e^{\I\theta})$ for some $\theta\in [0,2\pi)$ and some $k$ satisfying $\lambda_k> L^{1+\epsilon}$. Therefore, we have
\begin{align}
    |z_L|\geq L^{-2}\lambda_k^2(1-Cr_k\lambda_k^{-2})\geq L^{2\eps}(1-o(1)).
\end{align}
This shows that $(z_L)$ does not converge, contradicting our assumption. Hence, the alternative cannot hold, and we are done. 

2. Fix $R>0$. If there is a subsequence, again denoted by $(z_L)$, such that $|z_L|\leq RL^{-2}$, then $\lim_{L\to\infty} z_L=0$, so \eqref{second set limit} holds trivially. Otherwise, $|z_L|> RL^{-2}$ for $L$ sufficiently large, and then \eqref{second set expressed as inequality for zL} implies
\begin{align}
    |z_L|^{\frac{1}{2}-\sigma(q)}\leq C(1+R^{-1})^{\sigma(q)} L^{1-\frac{n}{q}+2\sigma(q)}\|V\|_{L^q(\R^n)}.
\end{align}
Since $1-\frac{n}{q}+2\sigma(q)=0$ for $q\leq (n+1)/2$ and $R$ is arbitrarily large, the claim follows.
\end{proof}

\bibliographystyle{plain}
\bibliography{Bibliography.bib}
\end{document}